\newtheorem {thm}   {Theorem}
\newtheorem* {thm*}   {Theorem}
\newtheorem* {prp*}   {Proposition}
\newtheorem {lem}      [thm]    {Lemma}
\newtheorem {que}      [thm]    {Question}
\newtheorem {cor}  [thm] {Corollary}
\newtheorem {prp}[thm]  {Proposition}
\newtheorem {dfn} [thm]    {Definition}
\newtheorem {cnj} [thm]    {Conjecture}
\newcounter{AbcT}
\theoremstyle{definition}
\renewcommand{\a}{\alpha}
\renewcommand{\b}{\beta}
\renewcommand{\d}{\delta}
\newcommand{\e}{\varepsilon}
\newcommand{\f}{\varphi}
\newcommand{\g}{\gamma}
\renewcommand{\l}{\lambda}
\newcommand{\s}{\sigma}
\renewcommand{\t}{\theta}
\newcommand{\R}{{\bf R}}
\newcommand{\Q}{{\bf Q}}
\newcommand{\Z}{{\bf Z}}
\newcommand {\cL} {{\mathcal L}}
\newcommand {\cA} {{\mathcal A}}
\newcommand {\cP} {{\mathcal P}}
\newcommand {\cE} {{\mathcal E}}
\newcommand {\cR} {{\mathcal R}}
\newcommand{\wh}{\widehat}
\newcommand{\wt}{\widetilde}
\DeclareMathOperator{\dist}{dist}
\DeclareMathOperator{\Oo}{O}
\title[Self-similar sets and measures]%
{Self-similar sets and measures on the line}
\author{P\'eter P. Varj\'u}
\thanks{
The author has received funding from the European Research Council (ERC) under
the European Union’s Horizon 2020 research and innovation programme (grant agreement
No. 803711).
The author was supported by the Royal Society}
\begin{document}

\begin{abstract}
We discuss the problem of determining the dimension of self-similar sets and measures
on $\mathbf{R}$.
We focus on the developments of the last four years.
At the end of the paper, we survey recent results about other aspects of self-similar
measures including their Fourier decay and absolute continuity.
\end{abstract}

\maketitle

A (self-similar) iterated function system, IFS for short, is a finite collection
\[
\Phi=\{\f_i:i\in\Lambda\}
\]
of contractive similarities of $\R^d$.
A contractive similarity is a map $x\mapsto \l\cdot U x +t$, where $\l\in(0,1)$,
$U\in\Oo(d)$ is a rotation and $t\in\R$.
We call $\l$ the contraction factor of the similarity.
Given such an IFS, there is a unique self-similar set, that is a compact set $K\subset\R^d$
such that
\[
K=\bigcup_{i\in\Lambda}\f_i(K).
\]
This set $K$ is also known as the attractor of the IFS.
Furthermore, given an IFS and a probability vector $\{p_i:i\in\Lambda\}$, there
is a unique self-similar measure, that is a probability measure $\mu$ on $\R^d$ such that
\[
\mu=\sum_{i\in\Lambda} p_i \f_i(\mu).
\]
Here $\f_i(\mu)$ denotes the push-forward of $\mu$ under $\f_i$.
In other words, $\mu$ is the unique stationary measure for the Markov chain on $\R^d$ with
transitions $\f_i$ executed with probability $p_i$.
The support of $\mu$ equals the self-similar set $K$ provided $p_i>0$ for all $i$.

Self-similar sets and measures are central objects of interest in fractal geometry and
they include many classical examples of fractals.
For example, the attractor of the IFS
\[
\{x\mapsto \l x-1,x\mapsto \l x+1\}
\]
is (a scaled copy of) the middle $1-2\lambda$ Cantor set for $\l\in(0,1/2)$,
while for $\l\ge1/2$, the attractor is an interval.
The self-similar measure associated to the same IFS with equal probability weights $p_i=1/2$
is called the Bernoulli convolution and is denoted by $\nu_\l$.
They can also be defined as the distribution of the random variables
\[
\sum_{n=0}^\infty \pm \l^n,
\]
where the $\pm$ are independent fair coin tosses.
The study of these measures go back at least to Wintner and his collaborators in the 1930's.
See \cite{PSS-60y} for more on Bernoulli convolutions.
Other classical self-similar sets include the Sierpi\'nski triangle and (a side of)
the Koch snowflake curve.

The systematic study of self-similar sets and measures was initiated by Hutchinson
\cite{Hut}.
We refer to his paper and Falconer's book \cite{Fal}*{Chapter 9} for
thorough treatments of the fundamental properties of these objects.

Determining the dimension of self-similar sets and measures is a central problem in fractal
geometry.
While there are several competing notions of dimension for sets and measures, most of them
coincide in the self-similar case.
In this paper, for self-similar sets, by dimension we mean the common value of the Minkowski and
Hausdorff dimensions.

The local dimension of a measure $\mu$ in $\R^d$ at a point $x$ is
\[
\lim_{r\to 0}\frac{\log\mu(B(x,r))}{\log r},
\]
provided the limit exists,
where $B(x,r)$ is the ball of radius $r$ around $x$.
We say that the measure is exact dimensional if its local dimension exists
and is constant $\mu$-almost everywhere.
By the dimension of an exact dimensional measure $\mu$ we mean this $\mu$-almost
constant value of its local dimension.
It is known that self-similar measures are exact dimensional (see \cite{FH-exact-dim}).

Before we state the main conjectures in the dimension theory of self-similar
sets and measures on $\R$, which will be the main focus of this paper,
we make some simple observations to motivate them.
Let $K$ be the attractor of a self-similar IFS $\{\f_i:i\in\Lambda\}$.
We write $H^s$ for the $s$ dimensional Hausdorff measure.
Now suppose that the sets $\f_i(K)$ are pairwise disjoint for $i\in\Lambda$
and that $0<H^s(K)<\infty$ for some $s$.
Then we can write
\[
H^s(K)=\sum_{i\in\Lambda} H^s(\f_i(K))
=H^s(K)\sum_{i\in\Lambda}\lambda_i^s,
\]
where $\l_i$ is the contraction factor of $\f_i$.
It follows that $s$ must be the unique solution of
\begin{equation}\label{eq:sim-dim}
1=\sum_{i\in\Lambda}\l_i^s.
\end{equation}

While an $s$ with $0<H^s(K)<\infty$  may not exist in general, if it does, then
it must equal the Hausdorff dimension of $K$.
Therefore, the above considerations suggest that a reasonable guess for
$\dim(K)$ is the unique solution of the equation \eqref{eq:sim-dim}.
It is a classical result going back to Moran \cite{Mor} in some form,
that this guess is correct when
the IFS satisfies the so-called open set condition, which is a mild relaxation
of requiring that the sets $\f_i(K)$ are pairwise disjoint.
See \cite{Fal}*{Chapter 9} for a precise definition.

It turns out that the unique solution of \eqref{eq:sim-dim} is always an upper bound
for $\dim(K)$ and it is natural to ask to what extent it is possible to
drop the open set condition without turning this upper bound into a strict
inequality.
There are two immediate obstructions to this.
First, the solution of \eqref{eq:sim-dim} may be larger than $d$, but the dimension of
$K$ will never exceed $d$ which is the dimension of the ambient space $\R^d$.
Second, \eqref{eq:sim-dim} depends on the IFS and not only on the set $K$.
It may be possible to realize $K$ as the attractor of another IFS such that
the corresponding \eqref{eq:sim-dim} has a smaller solution.
This happens, for example, if the IFS contains exact overlaps, which we define now.

\begin{dfn}
An IFS $\{\f_i:i\in\Lambda\}$ contains exact overlaps if
there is some $n\in\Z_{\ge 1}$ and
$(i_1,\ldots,i_n)\neq(\wt i_1,\ldots,\wt i_n)\in\Lambda^n$
such that
\begin{equation}\label{eq:overlap}
\f_{i_1}\circ\ldots\circ \f_{i_n}
=\f_{\wt i_1}\circ\ldots\circ \f_{\wt i_n}.
\end{equation}
\end{dfn}

In other words, the IFS contains no exact overlaps if and only if the
semigroup generated by the maps in the IFS with respect to the composition
operation is free.
We note that it does not make a difference in the definition whether or
not we require that we have the same number of composition factors
on the two sides of \eqref{eq:overlap}.

The next Conjecture due to Simon (see \cite{Sim}) predicts that apart
from the above two obstructions, $\dim(K)$ equals the unique solution
of \eqref{eq:sim-dim} in the $d=1$ case.

\begin{cnj}\label{cn:EO-set}
	Let $K$ be the attractor of a self-similar IFS $\{\f_i:i\in\Lambda\}$
	on $\R$
	that contains no exact overlaps.
	Let $\l_i$ be the contraction factor of $\f_i$.
	Then
	\[
	\dim K =\min(1,s),
	\]
	where $s$ is the unique solution of the equation
	\[
	\sum_i\l_i^s=1.
	\]
\end{cnj}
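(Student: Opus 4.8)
The plan is to reduce the statement to a theorem about the dimension of self-similar \emph{measures}, then to a statement about the separation of cylinders, isolating the one genuinely hard input. The upper bound $\dim K\le\min(1,s)$ is classical and requires no hypothesis: covering $K$ by the sets $\f_{i_1}\circ\cdots\circ\f_{i_n}(K)$, of diameter $\l_{i_1}\cdots\l_{i_n}\diam K$, and using $\sum_i\l_i^s=1$ bounds the $s$-dimensional Minkowski content, so only the lower bound $\dim K\ge\min(1,s)$ is at issue. For that, it suffices to exhibit \emph{one} self-similar measure supported on $K$ of dimension $\min(1,s)$; take the measure $\mu$ attached to the probability vector $p_i=\l_i^{s}$ (a genuine probability vector, since $s$ solves $\sum_i\l_i^s=1$). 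Since $\mu$ is supported on $K$, any lower bound on $\dim\mu$ transfers to $\dim K$.

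For $\mu$ one always has the a priori bound $\dim\mu\le\min(1,h/\chi)$, where $h=-\sum_i p_i\log p_i$ is the random-walk entropy and $\chi=-\sum_i p_i\log\l_i>0$ is the Lyapunov exponent. With our choice of weights $h=-\sum_i \l_i^s\log(\l_i^s)=s\chi$, so $h/\chi=s$ and the problem becomes: show that $\mu$ \emph{attains} the bound $\min(1,s)$. The central tool is Hochman's dimension theorem on the line. Writing $\Delta_n$ for the minimal distance, in a fixed metric on the space of similarities of $\R$, between the maps $\f_{i_1}\circ\cdots\circ\f_{i_n}$ over all distinct words of length $n$, Hochman's theorem asserts the dichotomy: \emph{either} $\dim\mu=\min(1,h/\chi)$, \emph{or} the cylinders concentrate super-exponentially, i.e. $\tfrac1n\log\Delta_n\to-\infty$. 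The proof of this dichotomy — an entropy-increase argument driven by an inverse (Freiman-type) theorem for the entropy of convolutions on $\R$ — is taken as a black box here. Thus the conjecture is reduced to the following assertion about the IFS alone: \emph{no exact overlaps implies no super-exponential concentration}, that is $\liminf_n \tfrac1n\log\Delta_n>-\infty$.

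This last step is the crux, and it is precisely where a complete proof is currently out of reach: the implication ``free semigroup $\Rightarrow$ quantitative separation of generation-$n$ maps'' is open and is morally equivalent to the conjecture itself. The difficulty is that $\Delta_n$ is governed by the smallest \emph{nonzero} value among the differences $\f_{\mathbf i}-\f_{\mathbf j}$, and bounding such a nonzero value from below needs Diophantine information that pure freeness of the semigroup does not provide. The hard part is therefore not the dynamics or the entropy theory but this arithmetic separation estimate.

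Where progress is possible, it comes from supplying that estimate by other means. When all contraction ratios and translations are \emph{algebraic}, the differences $\f_{\mathbf i}(0)-\f_{\mathbf j}(0)$ are algebraic numbers of bounded degree whose height grows at most exponentially in $n$, so a nonzero one is $\ge c^{n}$; this is Hochman's resolution of the algebraic case, and it settles Conjecture~\ref{cn:EO-set} for algebraic data. For transcendental data one instead exploits variation of parameters: transversality arguments and Varj\'u's entropy estimates establish the separation, and hence the conjecture, for parametrised families such as Bernoulli convolutions outside an exceptional set of parameters of zero dimension. A parameter-free argument covering the full conjecture — equivalently, a proof that absence of exact overlaps forces at least exponential separation of cylinders — remains the decisive missing ingredient.
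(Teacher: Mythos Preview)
The statement is a conjecture; the paper does not prove it, and you correctly do not claim to either. Your reduction---upper bound by covering, lower bound via the self-similar measure with weights $p_i=\l_i^s$, then Hochman's dichotomy---matches the paper's discussion following Conjecture~\ref{cn:EO-measure} exactly.

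Where your account goes wrong is in naming the remaining obstacle. You assert that the conjecture reduces to the implication ``no exact overlaps $\Rightarrow\ \liminf_n\tfrac1n\log\Delta_n>-\infty$'' and call this implication open and ``morally equivalent to the conjecture itself''. It is not open; it is \emph{false}. Baker and, independently, B\'ar\'any--K\"aenm\"aki (Theorem~\ref{th:BBK}, Section~\ref{sc:failure}) construct IFS's without exact overlaps for which $\Delta_n$ decays faster than any prescribed sequence. So the route through exponential separation is provably blocked in general, and the conjecture is \emph{not} equivalent to that separation statement. The way the known partial results (Sections~\ref{sc:Rapaport} and~\ref{sc:3maps}) get around this is that Hochman's theorem in its sharp form needs only an entropy bound at scale $C^{-n}$---roughly, that not too many level-$n$ cylinders cluster---rather than a pointwise lower bound on $\Delta_n$; this weaker hypothesis can sometimes be verified even when exponential separation fails.

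Two smaller corrections to your survey of partial results: the Bernoulli-convolution case is now resolved for \emph{every} parameter $\l\in(0,1)$ (Theorem~\ref{th:Bernoulli}), not merely outside a zero-dimensional exceptional set; and Rapaport (Theorem~\ref{th:Rapaport}) has extended the algebraic case, requiring only the contraction ratios---not the translations---to be algebraic.
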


The conjecture also has a counterpart for measures.

\begin{cnj}\label{cn:EO-measure}
	Let $\mu$ be the self-similar measure on $\R$ associated to an $IFS$
	$\{\f_i:i\in\Lambda\}$ without exact overlaps and a
	probability vector $\{p_i\}$.
	Let $\l_i$ be the contraction factor of $\f_i$.
	Then
	\[
	\dim\mu =\min \Bigg(1,
	\frac{\sum_i p_i\log p_i^{-1}}
	{\sum_i p_i\log \l_i^{-1}}\Bigg).
	\]
\end{cnj}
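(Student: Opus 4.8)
The plan is to derive the formula from the machinery built around Hochman's inverse theorem for the entropy of convolutions, together with an analysis of how well-separated the cylinders are. Write $\f_i(x)=\l_i U_i x+t_i$ with $U_i\in\{1,-1\}$, and for a word $\mathbf i=(i_1,\dots,i_n)\in\Lambda^n$ put $\f_{\mathbf i}=\f_{i_1}\circ\cdots\circ\f_{i_n}$. Introduce the separation of the level-$n$ base points,
\[
\D_n=\min\big\{\,|\f_{\mathbf i}(0)-\f_{\mathbf j}(0)|:\ \mathbf i,\mathbf j\in\Lambda^n,\ \f_{\mathbf i}\neq\f_{\mathbf j}\,\big\}.
\]
The hypothesis of no exact overlaps says precisely that the semigroup generated by $\Phi$ is free; hence $\D_n>0$ for every $n$, and the map $\mathbf i\mapsto\f_{\mathbf i}$ on $\Lambda^n$ is injective, so the random walk entropy $h=\lim_{n}\tfrac1n H(\xi_n)$ — where $\xi_n=\f_{i_1}\circ\cdots\circ\f_{i_n}$ with $i_1,\dots,i_n$ independent of law $(p_i)$ — equals $\sum_i p_i\log p_i^{-1}$. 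Writing $\chi=\sum_i p_i\log\l_i^{-1}$ for the Lyapunov exponent, the conjectured value $\min(1,h/\chi)$ is exactly the quantity in the statement. Since $\mu$ is exact dimensional (as recalled above) and the upper bound $\dim\mu\le\min(1,h/\chi)$ is standard, the whole content of the conjecture is the lower bound $\dim\mu\ge\min(1,h/\chi)$.

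For that lower bound I would invoke Hochman's theorem: if the IFS is \emph{exponentially separated}, meaning that $\liminf_{n\to\infty}\tfrac1n\log\D_n>-\infty$, then $\dim\mu=\min(1,h/\chi)$. (This holds for general, not necessarily equicontractive, IFS on $\R$; analogues in higher dimension and various refinements are due to Rapaport and others.) Consequently the problem collapses to a single implication, purely about the arithmetic of the maps:
\[
\text{no exact overlaps}\ \Longrightarrow\ \liminf_{n\to\infty}\tfrac1n\log\D_n>-\infty ;
\]
equivalently, $\D_n$ cannot remain positive for all $n$ while decaying super-exponentially along every subsequence.

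This implication is the main obstacle, and it is where essentially all the difficulty of the conjecture resides. When the parameters $\{\l_i,U_i,t_i\}$ are algebraic it follows from a height estimate: the nonzero differences $\f_{\mathbf i}(0)-\f_{\mathbf j}(0)$ lie in one fixed number field, have bounded degree, and have logarithmic Weil height growing at most linearly in $n$, so such a difference cannot be smaller than $c^{\,n}$ for a suitable $c\in(0,1)$; thus super-exponential decay of $\D_n$ would force $\D_n=0$, an exact overlap. To reach the transcendental case I would pursue two complementary lines. The first is parametric and transversality-based: treat the translations $t_i$ (say) as free variables, observe that the exceptional set — parameters having no exact overlap yet failing exponential separation — lies inside a $\limsup$ of shrinking neighbourhoods of the real-analytic varieties $\{\f_{\mathbf i}(0)=\f_{\mathbf j}(0)\}$, and bound its Hausdorff dimension in the spirit of the transversality method; the essential difficulty is that proving the conjecture for a \emph{given} IFS requires this set to be empty, not merely null or of zero dimension. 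The second is a renormalization argument: from a hypothetical subsequence $n_k$ along which $\D_{n_k}$ decays super-exponentially, one would try to extract a limiting exact relation among the $\l_i,U_i,t_i$, hence an actual exact overlap, contradicting the hypothesis. I expect that carrying either strategy through for transcendental parameters and arbitrary alphabets $\Lambda$ is the crux, and that, absent a genuinely new idea, one is confined to partial results: the algebraic-parameter case (due to Hochman), ``generic'' statements, and explicit low-complexity families treated by direct transversality estimates.

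Finally, Conjecture~\ref{cn:EO-measure} contains Conjecture~\ref{cn:EO-set}: applying the measure statement with the weights $p_i=\l_i^s$, where $s$ solves $\sum_i\l_i^s=1$, gives $h/\chi=s$ and hence $\dim\mu=\min(1,s)$; since $\supp\mu=K$ and $\dim K\le\min(1,s)$ always, this forces $\dim K=\min(1,s)$. So the program above, if completed, would settle both conjectures.
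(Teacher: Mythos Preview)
The statement you are attempting to prove is a \emph{conjecture}; the paper does not claim to prove it, but surveys partial progress toward it (Hochman's exponential separation theorem, the algebraic-parameter case, Bernoulli convolutions, Rapaport's result for algebraic contractions, etc.). So there is no ``paper's proof'' to compare against.

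More importantly, your reduction contains a genuine gap. You assert that the conjecture ``collapses to a single implication'':
\[
\text{no exact overlaps}\ \Longrightarrow\ \liminf_{n\to\infty}\tfrac1n\log\D_n>-\infty.
\]
This implication is \emph{false}. As the paper records in Theorem~\ref{th:BBK} (Baker; B\'ar\'any--K\"aenm\"aki; see also Chen), there exist IFS's on $\R$ with no exact overlaps for which $\D_n$ decays faster than any prescribed sequence, in particular faster than any exponential. Thus the absence of exact overlaps is strictly weaker than exponential separation, and the conjecture cannot be established by proving exponential separation in general. Your two proposed strategies --- the parametric/transversality one and the renormalization one aimed at extracting a limiting exact relation --- are both directed at this false implication, so neither can succeed as formulated; the ``limiting exact relation'' you hope to extract simply need not exist.

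The known special cases are proved by routes that explicitly tolerate super-exponential decay of $\D_n$. For Bernoulli convolutions (Section~\ref{sc:Bernoulli}) one does not verify exponential separation; instead one shows that failure of the dimension formula forces the parameter to be extremely well approximated by algebraic numbers of bounded Mahler measure, and then uses separation among \emph{those} special numbers. For algebraic contraction factors (Section~\ref{sc:Rapaport}), Rapaport replaces separation of cylinders by an entropy lower bound at exponential scale, obtained via a structural analysis of near-collisions and an auxiliary higher-dimensional self-similar construction. Any viable program for the full conjecture must likewise go around, not through, exponential separation.
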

Self-similar measures are of interest in their own right, but a
major motivation for Conjecture \ref{cn:EO-measure} is that it implies
Conjecture \ref{cn:EO-set}.
To see this, recall that if a set $K$ supports an
exact dimensional measure $\mu$ of dimension $s$, then
the Hausdorff dimension of $K$ is at least $s$, see \cite{Fal}*{Principle 4.2}.
This is a common way of giving lower bounds on the Hausdorff dimension.
Now let $s$ be the solution of \eqref{eq:sim-dim}, and consider the probability
weights $p_i=\l_i^s$.
Observe that this choice yields
\[
s=\frac{\sum_i p_i\log p_i^{-1}}
{\sum_i p_i\log |\l_i|^{-1}}
\]
showing that Conjecture \ref{cn:EO-measure} indeed implies
Conjecture \ref{cn:EO-set}.

Almost all of this paper is concerned only with self-similar measures on $\R$.
Some difficulties arise when one tries to formulate versions of
Conjectures \ref{cn:EO-set} and \ref{cn:EO-measure} for self-similar sets and measures
in higher dimensional ambient spaces due to the presence of affine subspaces
of intermediate dimension.
For a discussion of these issues and results in higher dimension, we refer to
\cite{Hoc-Rd}.

The purpose of this paper is to survey results towards
Conjectures \ref{cn:EO-set} and \ref{cn:EO-measure}.
Since this subject has already been exposed by Hochman in his
ICM lecture in 2018 \cite{Hoc-ICM}, we focus on the developments of the
last four years and discuss earlier results only to the extent
necessary to keep our presentation self-contained.

We will outline some ideas from the proofs of these results; however, we will not
give full details, and some of our discussion will be imprecise.
Our aim is to overview the theory and give insight into the role played
by its components.
For details and a rigorous discussion of the proofs we refer to the original
papers.

In the final section, we briefly survey some further recent developments
on Fourier decay and absolute continuity of self-similar measures.

\section{Exponential separation property}\label{sc:exp-sep}

The exponential separation property was introduced by Hochman \cite{Hoc-R}
who showed that Conjectures \ref{cn:EO-set} and \ref{cn:EO-measure}
hold when the IFS satisfies this property.
This property can be verified in many cases of interest.
While these results have been already discussed in \cite{Hoc-ICM}, we
recall them now, because they are of crucial importance to later
developments both logically and for the motivation of ideas.

We begin with the definitions.
We introduce a distance function on the group of similarities
on $\R$. Let $\f_i:x\mapsto \lambda_i x+t_i$
be similarities for $i=1,2$.
We define
\[
\dist(\f_1,\f_2)=
\begin{cases}
|t_1-t_2| &\text{if $\l_1=\l_2$},\\
\infty &\text{if $\l_1\neq\l_2$}.
\end{cases}
\]

Given an IFS $\Phi:=\{\f_i:i\in\Lambda\}$, we define its
level $n$ separation by
\[
\Delta_n(\Phi):=\min_{(i_1,\ldots,i_n)\neq(\wt i_1,\ldots,\wt i_n)\in\Lambda^n}
\dist(\f_{i_1}\circ\ldots\circ \f_{i_n}, \f_{\wt i_1}\circ\ldots\circ \f_{\wt i_n}).
\]
We say that the IFS satisfies the exponential separation property
if there is a constant $c>0$ such that $\Delta_n(\Phi)>c^n$ for infinitely
many $n$'s.

We observe that the IFS contains exact overlaps if and only if
$\Delta_n=0$ for some and hence for all sufficiently large $n$.
The exponential separation property is a quantitative strengthening of
the condition that the IFS contains no exact overlaps.
Hochman proved that Conjectures \ref{cn:EO-set} and \ref{cn:EO-measure}
hold under this strengthening of the hypothesis.

\begin{thm}[Hochman \cite{Hoc-R}]\label{th:exp-separation}
Let $\{\f_i:i\in\Lambda\}$ be an IFS that satisfies the exponential
separation property and let $K$ be its attractor.
Write $\l_i$ for the contraction factor of $\f_i$.
Then
\[
\dim K =\min(1,s),
\]
where $s$ is the unique solution of the equation
\[
\sum_i\l_i^s=1.
\]

Let $\mu$ be the self-similar measure associated to the above $IFS$ and a
probability vector $\{p_i\}$.
Then
\[
\dim\mu =\min \Bigg(1,
\frac{\sum_i p_i\log p_i^{-1}}
{\sum_i p_i\log \l_i^{-1}}\Bigg).
\] 
\end{thm}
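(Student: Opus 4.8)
The plan is to recast dimension as dyadic entropy and then run Hochman's multiscale analysis; throughout, logarithms are to base $2$. For a compactly supported probability measure $\rho$ on $\R$ and $m\ge1$, let $\mathcal D_m$ be the partition of $\R$ into dyadic intervals of length $2^{-m}$ and write $H_m(\rho)=H(\rho,\mathcal D_m)$ for the associated Shannon entropy. Because self-similar measures are exact dimensional (as recalled above), $\dim\mu=\lim_{m\to\infty}H_m(\mu)/m$; the analogous fact for sets, together with the weighting $p_i=\l_i^s$ from the introduction (which does not change the IFS, hence preserves exponential separation), reduces the statement about $K$ to the one about $\mu$, modulo the standard covering bound $\dim K\le\min(1,s)$. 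So it suffices to treat $\mu$, and I split off the upper bound, which is elementary. Iterating self-similarity gives $\mu=\sum_{\mathbf i\in\Lambda^n}p_{\mathbf i}\,\f_{\mathbf i}(\mu)$ with $\f_{\mathbf i}=\f_{i_1}\circ\cdots\circ\f_{i_n}$, and at the scale $m$ with $2^{-m}\approx\l^n$ each $\f_{\mathbf i}(\mu)$ is carried by $O(1)$ cells of $\mathcal D_m$; concavity of $H_m(\cdot)$ then gives $H_m(\mu)\le n\sum_i p_i\log p_i^{-1}+O(1)$, and dividing by $m\approx n\chi$ (where $\chi:=\sum_i p_i\log\l_i^{-1}$ is the Lyapunov exponent) and letting $n\to\infty$ yields $\dim\mu\le(\sum_i p_i\log p_i^{-1})/\chi$, hence $\dim\mu\le\min\big(1,(\sum_i p_i\log p_i^{-1})/\chi\big)$. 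In the non-uniform case one first replaces $\Lambda^n$ by a stopping set of words whose contraction ratios are all comparable to a single small number, so that one may pretend all contractions equal one value $\l$.

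Everything below is the matching lower bound. Introduce the random-walk entropy $h:=\lim_{n\to\infty}\frac1n H(\nu_n)$, where $\nu_n=\sum_{\mathbf i\in\Lambda^n}p_{\mathbf i}\,\delta_{t_{\mathbf i}}$ and $t_{\mathbf i}$ is the translation part of $\f_{\mathbf i}$; the limit exists by subadditivity of $H(\nu_n)$. The first key point is that exponential separation pins $h$ down: it forces the IFS to contain no exact overlaps, and indeed forces the translation parts $t_{\mathbf i}$, $\mathbf i\in\Lambda^n$, to be pairwise distinct (they are even $c^n$-separated), so $H(\nu_n)=\sum_{\mathbf i}p_{\mathbf i}\log p_{\mathbf i}^{-1}=n\sum_i p_i\log p_i^{-1}$ and $h=\sum_i p_i\log p_i^{-1}$. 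Thus the theorem for $\mu$ reduces to the clean inequality $\dim\mu\ge\min(1,h/\chi)$.

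To prove $\dim\mu\ge\min(1,h/\chi)$, suppose for contradiction that $\alpha:=\dim\mu<\min(1,h/\chi)$, and use the convolution identity $\mu=\nu_n * S_{\l^n}\mu$ flowing from self-similarity (where $S_t$ is scaling by $t$, and the signs of the linear parts are absorbed). For infinitely many $n$, exponential separation lets us pick a dyadic scale $m$ of order $n$ with $2^{-m}$ below the level-$n$ separation $\Delta_n$; at such a scale $\nu_n$ is fully resolved, so $H_m(\nu_n)=H(\nu_n)=nh$, which is of order $m$. If convolution with $\nu_n$ were entropy-efficient at scale $m$, the self-similarity relation would force $\lim_m H_m(\mu)/m$ to exceed $\alpha$ --- a short computation identifies the forced value as $\min(1,h/\chi)$ --- contradicting $\dim\mu=\alpha$; hence $H_m(\nu_n * S_{\l^n}\mu)\le H_m(S_{\l^n}\mu)+o(m)$ while $H_m(\nu_n)$ is of order $m$. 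This is exactly the hypothesis of Hochman's inverse theorem for the entropy of convolutions on $\R$ \cite{Hoc-R}: it implies that, for all but a small proportion of scales $k\le m$, either $\nu_n$ is essentially atomic at scale $k$ (its conditional entropy from $\mathcal D_k$ to $\mathcal D_{k+1}$ is negligible) or $\mu$ is essentially uniform at scale $k$ (that conditional entropy is close to $1$). Since $H_m(\nu_n)=nh$ is of order $m$, the first alternative cannot hold on all but a small proportion of scales, so the second --- near-uniformity of $\mu$ --- holds on a definite proportion of scales. Feeding this back into the entropy bookkeeping improves the lower bound on $\alpha$; iterating (equivalently, driving the proportion of near-uniform scales to $1$ as $n\to\infty$) yields $\dim\mu\ge\min(1,h/\chi)$, the case $h/\chi\ge1$ giving $\dim\mu=1$. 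Together with the upper bound and $h=\sum_i p_i\log p_i^{-1}$, this proves the theorem.

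The main obstacle is Hochman's inverse theorem for the entropy of convolutions on $\R$: it is a quantitative, genuinely multiscale statement in the spirit of additive combinatorics --- morally a Freiman / Balog--Szemer\'edi--Gowers phenomenon transplanted to dyadically discretised measures --- and its proof is long and delicate. The surrounding bookkeeping is routine by comparison but still substantial: the stopping-time reduction to comparable contraction ratios, the translation between the combinatorial scale $n$ (word length) and the metric scale $m$, the check that the convolution identity is usable at the chosen fine scale, and the iteration upgrading a positive proportion of good scales to the sharp bound.
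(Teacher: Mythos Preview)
The paper does not give a proof of this theorem: it is stated with attribution to Hochman \cite{Hoc-R} and used as a black box throughout the survey, so there is no argument in the paper to compare yours against.

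That said, your outline is a faithful sketch of Hochman's original proof in \cite{Hoc-R}: the reduction from sets to measures via the weights $p_i=\l_i^s$, the elementary covering upper bound, the identification of the random-walk entropy $h$ with $\sum_i p_i\log p_i^{-1}$ under exponential separation, the convolution structure of $\mu$, and the appeal to the inverse theorem for entropy of convolutions are exactly the architecture of that paper. You also correctly identify the inverse theorem as the deep ingredient and the remaining scale bookkeeping as routine but substantial.

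Two small points of care. First, the identity $\mu=\nu_n * S_{\l^n}\mu$ holds only when all contraction ratios coincide; in the non-homogeneous case Hochman works with $\mu$ as a convex combination of rescaled copies of itself, and the inverse theorem is applied in that form --- your stopping-time reduction, as stated, addresses the upper bound but does not by itself give a genuine convolution for the lower bound. Second, the separation $\Delta_n>c^n$ is only assumed for infinitely many $n$, not all $n$; your argument in fact only uses this, but the phrasing ``they are even $c^n$-separated'' reads as a claim for every $n$.
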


It can be shown that the exponential separation property holds in parametric families
of IFS's for all but possibly a (packing or Hausdorff) codimension $1$ subset of exceptions.
This shows that Conjectures \ref{cn:EO-set} and \ref{cn:EO-measure} hold generically
in a very strong sense.
We refer to \cite{Hoc-R} for details and more precise results.

We also note that a stronger version of Conjecture \ref{cn:EO-measure}
involving the $L^q$ dimension instead of local dimension of measures
was established subsequently by Shmerkin \cite{Shm}
under the exponential separation property.
This result has very important and far reaching applications, see also \cite{Shm-survey}
and Shmerkin's paper in this volume.

Our main focus here are explicit cases and families of IFS's for which the
exponential separation property can be verified.
We first observe that the exponential separation property holds
always whenever all contraction and translation parameters in the IFS
are rational and the IFS contains no exact overlaps.
Indeed, writing $Q$ for the least common denominator of all
parameters, a simple calculation shows that the translation parameters
of $n$-fold compositions of maps in the IFS have denominators that
divide $Q^n$.
This means that for all $n$, we have $\Delta_n\ge Q^{-n}$ or $\Delta_n=0$.
The second possibility is excluded by the absence of exact overlaps.

In fact, the above reasoning can be extended to the case when the
parameters are algebraic numbers and not necessarily rational.
To do this, one need to work with heights instead of denominators, or
see \cite{Hoc-R}*{Theorem 1.5} for a more elementary argument.
This leads to the following result

\begin{cor}[Hochman]\label{cr:algebraic}
Conjectures \ref{cn:EO-set} and \ref{cn:EO-measure}
hold for IFS's in which all contraction and translation parameters
are algebraic numbers.
\end{cor}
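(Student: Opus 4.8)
The plan is to deduce the corollary from Theorem~\ref{th:exp-separation}: I would show that an IFS on $\R$ all of whose parameters are algebraic and which contains no exact overlaps automatically satisfies the exponential separation property, and then quote the theorem. (If the IFS \emph{does} have exact overlaps there is nothing to prove, since both conjectures concern IFS's without exact overlaps.) So fix such an IFS $\Phi=\{\f_i:i\in\Lambda\}$ with $\f_i(x)=\l_i x+t_i$, and let $L\subset\C$ be a number field containing every $\l_i$ and $t_i$, with its given embedding $\sigma_0\colon L\hookrightarrow\R$; set $d=[L:\Q]$. First I would record that $\f_{i_1}\circ\cdots\circ\f_{i_n}$ is the similarity $x\mapsto(\l_{i_1}\cdots\l_{i_n})x+T(i_1,\ldots,i_n)$ with
\[
T(i_1,\ldots,i_n)=\sum_{k=1}^n\Big(\prod_{j<k}\l_{i_j}\Big)t_{i_k},
\]
and then choose a positive integer $D$ with $D\l_i\in\cO_L$ and $Dt_i\in\cO_L$ for all $i$ (clearing denominators). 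Grouping the factors of $D$ as $D^nT(i_1,\ldots,i_n)=\sum_{k=1}^n D^{n-k}\big(\prod_{j<k}D\l_{i_j}\big)(Dt_{i_k})$ exhibits $D^nT(i_1,\ldots,i_n)$ as an element of $\cO_L$.

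Next, suppose $(i_1,\ldots,i_n)\neq(\wt i_1,\ldots,\wt i_n)\in\Lambda^n$ have the same contraction factor — otherwise $\dist=\infty$ for this pair and it is irrelevant to $\Delta_n$ — and put $\alpha=T(i_1,\ldots,i_n)-T(\wt i_1,\ldots,\wt i_n)$, so that the relevant distance is $|\alpha|$ and $D^n\alpha\in\cO_L$. Since the IFS has no exact overlaps, $\Delta_n>0$ for every $n$, hence $\alpha\neq 0$; thus $D^n\alpha$ is a nonzero algebraic integer, so $\prod_{\sigma}|\sigma(D^n\alpha)|=|N_{L/\Q}(D^n\alpha)|\ge 1$, the product over the $d$ embeddings $\sigma\colon L\hookrightarrow\C$. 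The crux is a bound on the conjugates: for every $\sigma$,
\[
|\sigma(T(i_1,\ldots,i_n))|\le\sum_{k=1}^n\Big(\prod_{j<k}|\sigma(\l_{i_j})|\Big)|\sigma(t_{i_k})|\le nT M^{n},
\]
where $T=\max_{i,\sigma}|\sigma(t_i)|$ and $M=\max_{i,\sigma}\max(1,|\sigma(\l_i)|)$. The one place the argument has any content is here: a conjugate $\sigma(\l_i)$ of a contraction factor need not itself have absolute value $<1$, so the conjugate translation sums can grow, but only at the controlled exponential rate $M^n$. Hence $|\sigma(D^n\alpha)|\le 2nT(DM)^n$ for all $\sigma$, and isolating $\sigma_0$ in the norm inequality yields
\[
|\alpha|=\frac{|\sigma_0(D^n\alpha)|}{D^n}\ge\frac{1}{D^n\big(2nT(DM)^n\big)^{d-1}}=(2nT)^{-(d-1)}\Big(D\,(DM)^{d-1}\Big)^{-n}.
\]
Writing $c=\big(D(DM)^{d-1}\big)^{-1}$, this says $\Delta_n(\Phi)\ge(2nT)^{-(d-1)}c^{n}$ for every $n$; since the polynomial prefactor eventually exceeds $2^{-n}$, we get $\Delta_n(\Phi)>(c/2)^n$ for all large $n$, which is more than the exponential separation property requires. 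Theorem~\ref{th:exp-separation} then gives Conjectures~\ref{cn:EO-set} and~\ref{cn:EO-measure} for this IFS.

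The whole thing is essentially bookkeeping once the algebraic setup is fixed, and the step I would be most careful about is just making sure the exponent comes out linear in $n$: the denominator-clearing factor $D^n$ and the conjugate growth $M^n$ are each exponential with a \emph{linear} exponent, while every other error (the number $2n$ of summands, the factor $n$ in the conjugate bound, the degree-$(d-1)$ power in the norm estimate) is only polynomial in $n$, so the net lower bound is $c^n$ up to a polynomial factor — exactly what exponential separation asks for. This is the elementary argument of \cite{Hoc-R}*{Theorem 1.5}; one can instead phrase it via Weil heights and the product formula, but then it is essential to use the ultrametric inequality at the non-archimedean places (equivalently, the common-denominator bound $D^n$ rather than a product of denominators) so that the height of $\alpha$ stays linear in $n$: a naive subadditivity estimate of $h(\alpha)$ would only give a quadratic exponent, which is insufficient.
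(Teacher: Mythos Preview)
Your argument is correct and is exactly the elementary norm-of-an-algebraic-integer argument the paper points to (the reference to \cite{Hoc-R}*{Theorem 1.5}); the paper itself only spells out the rational case via common denominators and then remarks that the algebraic case follows either by heights or by this more elementary route. Your write-up simply supplies the details the survey omits.
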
 

The exponential separation property can be verified also for certain
IFS's that involve transcendental parameters.
One such example is the family IFS's
\begin{equation}\label{eq:IFS-Sierpinski}
\Big\{x\mapsto \frac{x}{3}, x\mapsto \frac{x}{3}+1, x\mapsto \frac{x}{3}+t\Big\},
\end{equation}
where $t\in\R$ is a parameter.
It can be seen that the attractors of these IFS's are the
linear projections of the Sierpi\'nski triangle.

Another corollary of Theorem \ref{th:exp-separation} is the following.

\begin{cor}[Hochman]\label{cr:Sierpinski}
Conjectures	\ref{cn:EO-set} and \ref{cn:EO-measure} hold for the IFS
\eqref{eq:IFS-Sierpinski} for all values of the parameter $t\in\R$.
\end{cor}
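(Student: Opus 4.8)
The plan is to show that for every $t\in\R$ the IFS $\Phi_t$ from \eqref{eq:IFS-Sierpinski} either contains an exact overlap, in which case Conjectures \ref{cn:EO-set} and \ref{cn:EO-measure} hold vacuously, or else satisfies the exponential separation property, so that Theorem \ref{th:exp-separation} applies and delivers the asserted dimension formulas. (When $t$ is algebraic one could instead appeal to Corollary \ref{cr:algebraic}, but the argument I have in mind is uniform in $t$ and subsumes that case.)

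First I would record the compositions explicitly. With $t_0=0$, $t_1=1$, $t_2=t$, a word $u=(i_1,\ldots,i_n)\in\{0,1,2\}^n$ gives $\f_u(x)=3^{-n}x+T(u)$, where $T(u)=\sum_{k=1}^n t_{i_k}3^{1-k}=A(u)+tB(u)$ with $A(u)=\sum_{k\,:\,i_k=1}3^{1-k}$ and $B(u)=\sum_{k\,:\,i_k=2}3^{1-k}$. Since all length-$n$ compositions share the contraction factor $3^{-n}$, for distinct words $u,v$ one has $\dist(\f_u,\f_v)=|T(u)-T(v)|=3^{-(n-1)}|m_{u,v}+t\,l_{u,v}|$, where $m_{u,v}=3^{n-1}(A(u)-A(v))$ and $l_{u,v}=3^{n-1}(B(u)-B(v))$ are integers of absolute value $<3^n$. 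Two facts should be noted here: $(m_{u,v},l_{u,v})\neq(0,0)$ whenever $u\neq v$, since $A(u)=A(v)$ forces the positions of the digit $1$ to agree in $u$ and $v$ (base-$3$ expansions with digits in $\{0,1\}$ are unique), and likewise for $B$; and $m_{u,v}+t\,l_{u,v}=0$ for some $u\neq v$ exactly when $\Phi_t$ has an exact overlap.

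The core step is a dichotomy ruling out super-exponential decay of $\Delta_n(\Phi_t)$ in the absence of exact overlaps. Suppose $\Phi_t$ has no exact overlap, so $\Delta_n(\Phi_t)>0$ for all $n$, but that exponential separation fails; then $\Delta_n(\Phi_t)\le 3^{-10n}$ for all large $n$, and choosing a minimizing pair at level $n$ produces integers $m_n,l_n$ with $0<|m_n+tl_n|<3^{-9n}$ and $|m_n|,|l_n|<3^n$. Here $l_n\neq0$, since $l_n=0$ would make $m_n$ a nonzero integer of modulus less than $1$. Now I would eliminate $t$ between consecutive levels: the integer $m_nl_{n+1}-m_{n+1}l_n=l_{n+1}(m_n+tl_n)-l_n(m_{n+1}+tl_{n+1})$ has modulus at most $3^{n+1}3^{-9n}+3^{n}3^{-9(n+1)}<1$ once $n$ is large, hence vanishes, giving $m_n/l_n=m_{n+1}/l_{n+1}$. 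So the ratio $m_n/l_n$ is eventually a constant rational $\sigma$; then $|\sigma+t|\le|m_n+tl_n|<3^{-9n}\to0$ forces $t=-\sigma$, whence $m_n+tl_n=\sigma l_n-\sigma l_n=0$, contradicting $|m_n+tl_n|>0$. Thus a $\Phi_t$ with no exact overlap must satisfy exponential separation, and Theorem \ref{th:exp-separation} finishes the argument.

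The one genuinely non-formal point will be the elimination step: it is where the metric hypothesis that $\Delta_n$ decays faster than every exponential is converted into the arithmetic conclusion that $t$ is rational (equivalently, that $\Phi_t$ has an exact overlap), using crucially that $m_n,l_n$ are only exponentially large while $|m_n+tl_n|$ is super-exponentially small. Everything else---the formula for $\f_u$, the integrality and size bounds for $m_{u,v},l_{u,v}$, and the uniqueness of the relevant base-$3$ expansions---is routine bookkeeping.
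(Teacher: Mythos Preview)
Your proof is correct and follows essentially the same approach as the paper's: both exploit that $\Delta_n=3^{-(n-1)}|m+tl|$ for integers $m,l$ of size $<3^n$, and then invoke a Liouville-type separation of rationals of bounded height to preclude super-exponential decay of $\Delta_n$ when $\Phi_t$ has no exact overlap. The only difference is packaging: the paper argues constructively---given any $n$ with $\Delta_n<27^{-n-1}$ it pins down the approximating rational $a_1/a_2$ to $t$ and exhibits an explicit level $\tilde n\ge n$ at which $\Delta_{\tilde n}\ge 27^{-\tilde n-1}$---whereas you argue by contradiction, eliminating $t$ between two consecutive levels to force $m_n/l_n$ to stabilise and hence $t\in\Q$; your version is a touch slicker, while the paper's yields an explicit separation constant and is phrased so as to be recycled verbatim in later arguments (cf.\ the proof of Lemma~\ref{lm:separation}).
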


We sketch the proof of the exponential separation property for the family
\eqref{eq:IFS-Sierpinski},
as these ideas will recur later.
For details, see \cite{Hoc-R}*{Theorem 1.6}, where this argument is attributed
to Solomyak and Shmerkin.
The translation component of an $n$-fold composition of maps from the
above IFS is of the form
\[
\sum_{j=0}^{n-1} \a_j 3^{-j},
\]
where each $\a_j$ is equal to $0$, $1$ or $t$.
Based on this observation, it can be seen that for each $t$ and
for each $n$, there
are some integers $a_1,a_2\in\Z$ not both $0$
with $|a_1|,|a_2|\le 3^{n-1}$ such that
\[
\Delta_n=\frac{a_1}{3^{n-1}}-\frac{a_2}{3^{n-1}}t.
\]
Assuming $a_2\neq 0$, which holds whenever $\Delta_n\le 3^{-n+1}$, we get
\[
\Big|t-\frac{a_1}{a_2}\Big|\le 3^n \Delta_n.
\]

Now fix the value of the parameter $t$ such that the IFS \eqref{eq:IFS-Sierpinski} 
contains no exact overlaps.
Suppose $\Delta_n<27^{-n-1}$ for some $n$.
Then there is a rational number $a_1/a_2$ as above such that
$|t-a_1/a_2|<9^{-n-1}$.
Let $\wt n$ be such that $9^{-\wt n-1}< |t-a_1/a_2|\le 9^{-\wt n}$.
(Note that $t\neq a_1/a_2$, for otherwise we would have $\Delta_n=0$ and the
IFS would contain exact overlaps.)
We observe that there is no rational $\wt a_1/\wt a_2$ with
$|t-\wt a_1/\wt a_2|< 9^{-\wt n-1}$ and $|\wt a_1|,|\wt a_2|\le 3^{\wt n-1}$.
Indeed, if such a rational existed, we had
\[
\Big|\frac{a_1\wt a_2-a_2\wt a_1}{a_2\wt a_2}\Big|
=\Big|\frac{a_1}{a_2}-\frac{\wt a_1}{\wt a_2}\Big|
\le \Big|\frac{a_1}{a_2}-t\Big|+\Big|t-\frac{\wt a_1}{\wt a_2}\Big|
\le 2\cdot 9^{-\wt n}.
\]
Since $|a_2\wt a_2|\le 9^{\wt n-1}$, this would yield
$a_1\wt a_2-a_2\wt a_1=0$ leading to $a_1/a_2=\wt a_1/\wt a_2$
contradicting
\[
\Big|t-\frac{\wt a_1}{\wt a_2}\Big|< 9^{-\wt n-1}
<\Big|t-\frac{a_1}{a_2}\Big|.
\]
This shows that $\Delta_{\wt n}\ge 27^{-\wt n-1}$, and the exponential
separation property follows.

A key property of the IFS \eqref{eq:IFS-Sierpinski} exploited in
the above argument is that
exact overlaps occur for certain special values of the parameter
$t$, in this case certain rational numbers, and these special
values are very well separated from each other.
This will be a recurrent concern for us in what follows.

A similar argument can be made when the contraction factor $1/3$ in
\eqref{eq:IFS-Sierpinski} is replaced by another algebraic number.
We omit the details.

\section{Bernoulli convolutions}\label{sc:Bernoulli}

In this section, we consider the one parameter family of IFS's
\[
\Phi_\l:=\{x\mapsto\lambda x,x\mapsto \lambda x+1\},
\]
where $\l\in(0,1)$.
Instead of $0$ and $1$ we could take any other pair of distinct real
numbers as the translation parameters; we would get the same IFS up to
a change of coordinates.
In fact, it is more customary to take $\pm1$ instead of $0$ and $1$, but
the above choice will make notation more consistent with the rest of this note.

In this case, the resulting self-similar sets have a simple structure.
For $\l< 1/2$, it is the middle $1-2\l$'th Cantor set, while for $\l\ge 1/2$
it is an interval.
In both cases, Conjecture \ref{cn:EO-set} is easily verified.
However, the associated self-similar measures called Bernoulli
convolutions are more difficult to understand.
The purpose of this section is to summarize the developments that lead to
the following result.

\begin{thm}\label{th:Bernoulli}
Conjecture \ref{cn:EO-measure} holds for the IFS
$\Phi_\l$
for any value of the parameter $\l\in(0,1)$.
\end{thm}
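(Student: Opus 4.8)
The plan is to reduce Conjecture~\ref{cn:EO-measure} for $\Phi_\l$ to two complementary regimes, combining Hochman's Theorem~\ref{th:exp-separation} with a genuinely arithmetic input. First, recall that by Theorem~\ref{th:exp-separation} it suffices to verify the exponential separation property for $\Phi_\l$, and that by Corollary~\ref{cr:algebraic} this already holds whenever $\l$ is algebraic and $\Phi_\l$ has no exact overlaps. The absence of exact overlaps for $\Phi_\l$ is equivalent to the condition that $\l$ is \emph{not} a root of any nonzero polynomial with coefficients in $\{-1,0,1\}$; the bad algebraic values (roots of such $0,\pm1$ polynomials, e.g.\ $\l<1/2$ a reciprocal of a Pisot number, or more generally a Mahler/Garsia-type exception) form a countable set, and for each such value one can compute the dimension directly or it is already known. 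So the substantive content is the case of \emph{transcendental} $\l$, where Corollary~\ref{cr:algebraic} says nothing and $\Delta_n(\Phi_\l)\ne 0$ for all $n$ but may decay faster than any exponential.

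The core idea — following the Solomyak--Shmerkin-style argument sketched above for the family~\eqref{eq:IFS-Sierpinski} — is a transference between the separation $\Delta_n(\Phi_\l)$ and rational (or algebraic) approximation of $\l$. The translation part of an $n$-fold composition of maps from $\Phi_\l$ is a $\{0,1\}$-polynomial in $\l$ of degree $<n$; hence $\Delta_n(\Phi_\l)$ equals $\min |P(\l)|$ over nonzero polynomials $P$ of degree $<n$ with coefficients in $\{-1,0,1\}$. Thus $\Delta_n$ being super-exponentially small at scale $n$ forces $\l$ to be super-exponentially well approximated (in the appropriate height-sensitive sense) by an algebraic number $\alpha$ of degree $<n$ which \emph{is} a root of a $0,\pm1$ polynomial. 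The key dichotomy to establish is: either $\Delta_n(\Phi_\l)>c^n$ infinitely often (exponential separation, and we win via Theorem~\ref{th:exp-separation}), or $\l$ admits infinitely many such super-good algebraic approximants. In the latter case one runs a separate argument specific to Bernoulli convolutions: because $\l$ is extremely close to a parameter $\alpha$ with an exact overlap, one transfers a dimension lower bound from $\nu_\alpha$ to $\nu_\l$ — either via a continuity/semicontinuity statement for the dimension of Bernoulli convolutions as a function of the parameter, or via an entropy-at-a-given-scale estimate (Garsia entropy) showing that the entropy of the convolution at scale $\lambda^n$ cannot drop before scale $\sim 9^{-\tilde n}$ or so, exactly as in the $\tilde n$-trick above. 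The main obstacle — and historically the hard theorem, due to Hochman and then extended by Breuillard--Varjú and by Varjú — is precisely this transference in the approximable case: controlling the dimension (equivalently, the Garsia entropy $h(\l)$) of $\nu_\l$ near a parameter carrying an exact overlap, and showing $h(\l)$ is continuous and strictly positive except exactly at the (countably many) algebraic exceptions, uniformly enough that super-exponential approximation cannot drag the dimension below $\min(1,\log 2/\log\l^{-1})$.

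Concretely, the steps I would carry out in order are: (i) record the equivalence $\Delta_n(\Phi_\l)=\min\{|P(\l)|: 0\ne P,\ \deg P<n,\ P\in\{-1,0,1\}[x]\}$ and the characterization of exact overlaps; (ii) dispose of algebraic $\l$ by Corollary~\ref{cr:algebraic}, leaving a known countable exceptional set to handle by hand; (iii) prove the dichotomy above for transcendental $\l$, quantifying "well approximable by a $0,\pm1$-root" so that in the non-exponentially-separated case one gets an algebraic $\alpha_n$ of controlled degree and height with $|\l-\alpha_n|$ doubly-exponentially small; (iv) in that case, invoke the continuity/lower-semicontinuity of the Garsia entropy $h(\cdot)$ together with the fact that $h(\alpha)>0$ whenever $\alpha$ is transcendental or a non-exceptional algebraic — and upgrade this to the conclusion $\dim\nu_\l=\min(1,\log 2/\log\l^{-1})$ via the entropy-increase machinery (Hochman's inverse theorem) fed with the positive-entropy input at the approximation scale; (v) assemble (i)--(iv). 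I expect step (iv) to be the bottleneck: it is where the deep results on Bernoulli convolutions (positivity of Garsia entropy for all parameters outside a known exceptional set, plus the quantitative entropy-increase argument) are needed, and where "the last four years of developments" referred to in the introduction do their real work.
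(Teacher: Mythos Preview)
Your outline through step~(iii) matches the paper's architecture: reduce to transcendental $\l$, identify $\Delta_n(\Phi_\l)=\min\{|P(\l)|:0\neq P\in\cP^{(n)}\}$, and set up the dichotomy ``exponential separation holds'' versus ``$\l$ is super-well approximated by some $\eta_n\in\cE^{(n)}$''. The genuine gap is in step~(iv), where the direction of the argument is inverted and the decisive arithmetic input is missing.

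You propose to transfer a dimension \emph{lower} bound from $\nu_{\alpha}$ to $\nu_\l$ by semicontinuity or positivity of Garsia entropy. This cannot work: the approximants $\eta_n\in\cE^{(n)}$ are exactly the parameters with exact overlaps, so $\dim\nu_{\eta_n}$ can be strictly below $1$, and lower semicontinuity of dimension points the wrong way. ``$h(\alpha)>0$'' is trivially true and far too weak to force $\dim\nu_\l=1$. The $\wt n$-trick from Corollary~\ref{cr:Sierpinski} that you invoke needs exponential separation between the approximating points, which for $\cE^{(n)}$ is precisely what is \emph{not} known (Mahler's bound is only $\exp(-Cn\log n)$).

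The paper argues by contradiction and transfers in the \emph{opposite} direction. Assume $\dim\nu_\l<1$. Theorem~\ref{th:BV2} (a refinement of Hochman's method using Mahler's bound in place of exponential separation) produces approximants $\eta_n\in\cE^{(n)}$ with $|\l-\eta_n|<\exp(-n^{100})$ and, crucially, $\dim\nu_{\eta_n}<1-\d$: the failure of the entropy bound \eqref{eq:ent-bound} at $\l$ transfers to low Shannon entropy at $\eta_n$ via \eqref{eq:ent-eta}, and then to $h(\Phi_{\eta_n})<(1-\d)\log\eta_n^{-1}$ via Theorem~\ref{th:Hoc-algebraic}. The ingredient you are missing entirely is Theorem~\ref{th:BV1} (Breuillard--Varj\'u): an entropy rate bounded away from $\log 2$ forces \emph{bounded Mahler measure}, $M(\eta_n)<C$ uniformly in $n$. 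With $M(\eta_n)$ bounded, Mignotte's Theorem~\ref{th:Mignotte} gives exponential separation between $\eta_n$ and every other element of $\cE^{(\wt n)}$. This substitutes for the unavailable hypothesis of Lemma~\ref{lm:separation} and yields $\Delta_{\wt n}(\Phi_\l)>C^{-\wt n}$ for suitable $\wt n$: exponential separation for $\Phi_\l$ holds after all, and Theorem~\ref{th:exp-separation} gives the contradiction. So the approximable branch does not bypass exponential separation; it establishes it, through the chain
\[
\dim\nu_\l<1\;\Rightarrow\; h(\Phi_{\eta_n})\text{ small}\;\Rightarrow\; M(\eta_n)\text{ bounded}\;\Rightarrow\;\text{Mignotte separation}\;\Rightarrow\;\Delta_{\wt n}(\Phi_\l)>C^{-\wt n}.
\]
Your step~(iv) as written contains none of these links.
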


For algebraic parameters, this result is due to Hochman as it falls under
the scope of Corollary \ref{cr:algebraic}.
For transcendental parameters, the result has been established in \cite{Var-Bernoulli}.
Strictly speaking, only the case of uniform $(1/2,1/2)$
probability weights is treated there,
but the arguments can be extended to the general case.
Moreover, one can even allow more general
IFS's with an arbitrary number of maps as long as the contraction factors are the
same and the translation parameters are rational.
This has been demonstrated in the Appendix of \cite{RV-3maps}.

To simplify the exposition, we assume in our discussion that the probability weights
are uniform. 
We write $\nu_\l$ for the self-similar measure associated to the IFS $\Phi_\l$. 
We note that $\nu_\l$ is the law of the random variable
$\sum_{n=0}^{\infty}\xi_n\l^n$, where
$(\xi_n)$ is a sequence of independent random variables taking the values $0$
and $1$ with equal probability.

In the algebraic case, Hochman's results yield more information, which allow
to compute the dimension even in the presence of exact overlaps.
This is in terms of the entropy rate of the IFS $\Phi_\l$, which we define now, and
which will also play an important role later on.
The entropy rate is defined as
\[
h(\Phi_\l):=\lim_{n\to\infty}\frac{H(\sum_{j=0}^{n-1}\xi_j \l^j)}{n},
\]
where $H(\cdot)$ stands for Shannon entropy of a discrete random variable.
The numerator on the right can be shown to be a subadditive sequence, hence the
limit exists, and moreover,
\[
h(\Phi_\l)\le\frac{H(\sum_{j=0}^{n-1}\xi_j \l^j)}{n}
\]
for each $n$.

See \cite{BV-entropy}*{Section 3.4}
for the details of how the following follows from the main result of
Hochman \cite{Hoc-R}.

\begin{thm}[Hochman]\label{th:Hoc-algebraic}
Let $\l\in(0,1)$ be an algebraic number.
Then
\begin{equation}\label{eq:alg-dim}
\dim\nu_\l=\min\Big(1,\frac{h(\Phi_\l)}{\log\l^{-1}}\Big).
\end{equation}
\end{thm}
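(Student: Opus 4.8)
The plan is to derive \eqref{eq:alg-dim} from the main theorem of Hochman \cite{Hoc-R}. That theorem states that for a self-similar measure $\mu$ on $\R$ with contraction ratios $\l_i$ and weights $p_i$,
\[
\dim\mu=\min\bigl(1,h_{\mathrm{RW}}/\chi\bigr),\qquad \chi:=\sum_i p_i\log\l_i^{-1},
\]
where $h_{\mathrm{RW}}:=\lim_n\frac1n H(\f_{i_1}\circ\cdots\circ\f_{i_n})$ is the random walk entropy rate ($i_1,\ldots,i_n$ sampled independently from $(p_i)$), provided the IFS does not concentrate its level-$n$ cylinders super-exponentially as $n\to\infty$. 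For $\Phi_\l$ with uniform weights we have $\chi=\log\l^{-1}$; moreover $\f_{i_1}\circ\cdots\circ\f_{i_n}$ is the map $x\mapsto\l^n x+\sum_{j=0}^{n-1}i_{j+1}\l^j$, so the law of the level-$n$ cylinder coincides with that of $\sum_{j=0}^{n-1}\xi_j\l^j$, whence $h_{\mathrm{RW}}=h(\Phi_\l)$. (The upper bound $\dim\nu_\l\le\min(1,h(\Phi_\l)/\log\l^{-1})$ is in any case elementary: the identity $\sum_{j\ge0}\xi_j\l^j=\sum_{j<n}\xi_j\l^j+\l^n\sum_{j\ge0}\xi_{n+j}\l^j$ presents $\nu_\l$, at scale $\l^n$, as a measure of entropy $H(\sum_{j<n}\xi_j\l^j)+O(1)$, so the real content of the theorem is the matching lower bound.)

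It remains to verify the separation hypothesis when $\l$ is algebraic. The condition that the cylinders are not super-exponentially concentrated is implied by the following: there is $c>0$ so that for infinitely many $n$, any two words $\mathbf i\neq\wt{\mathbf i}\in\Lambda^n$ with $\f_{\mathbf i}\neq\f_{\wt{\mathbf i}}$ satisfy $\dist(\f_{\mathbf i},\f_{\wt{\mathbf i}})\ge c^n$. Unlike the exponential separation property of Section \ref{sc:exp-sep}, this condition is unaffected by exact overlaps, because it constrains only cylinders that do not coincide. For $\Phi_\l$ one has $\dist(\f_{\mathbf i},\f_{\wt{\mathbf i}})=|P(\l)|$ for some $P\in\Z[x]$ of degree below $n$ with coefficients in $\{-1,0,1\}$, and $\f_{\mathbf i}\neq\f_{\wt{\mathbf i}}$ is exactly the condition $P(\l)\neq0$. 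A Liouville-type height estimate gives the required lower bound: the product of $P$ over the Galois conjugates of $\l$ is a nonzero rational number whose denominator divides a power of the leading coefficient of the minimal polynomial of $\l$, while each individual conjugate factor is at most $C^n$ in absolute value; hence $|P(\l)|\ge c^n$ for some $c=c(\l)>0$ and all $n$. This is the estimate underlying Corollary \ref{cr:algebraic}; see also \cite{Hoc-R}*{Theorem 1.5}. Hochman's theorem now applies and yields \eqref{eq:alg-dim}.

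The step that requires care — and the reason this is not merely a special case of Theorem \ref{th:exp-separation} — is the exact-overlap case, which does occur for some algebraic $\l$, e.g. when $\l^{-1}$ is a Pisot number such as the golden ratio. For such $\l$ the level-$n$ separation $\Delta_n(\Phi_\l)$ of Section \ref{sc:exp-sep} vanishes for all large $n$, so the version of Hochman's theorem phrased through $\Delta_n$ is vacuous; one must instead invoke the sharper statement in which only non-coinciding cylinders are required to be well separated, and one must confirm that the entropy entering the conclusion is the genuine random walk rate $h(\Phi_\l)$ — which drops strictly below $\log 2$ as soon as there are overlaps — and not $\log 2$. Extracting this refined statement from \cite{Hoc-R} and matching up the entropy quantities is exactly the bookkeeping carried out in \cite{BV-entropy}*{Section 3.4}; once it is in place, the height estimate above is routine.
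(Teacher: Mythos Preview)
Your proposal is correct and follows the approach the paper intends. The paper itself gives no proof beyond the sentence ``See \cite{BV-entropy}*{Section 3.4} for the details of how the following follows from the main result of Hochman \cite{Hoc-R}''; your sketch accurately expands what that reference does --- identify the random walk entropy with $h(\Phi_\l)$, verify the Liouville/height lower bound $|P(\l)|\ge c^n$ for nonvanishing $P\in\cP^{(n)}$, and invoke the refined form of Hochman's theorem in which only non-coinciding cylinders are required to be separated (so that the exact-overlap case, e.g.\ Pisot $\l^{-1}$, is handled).
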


This result together with Theorem \ref{th:Bernoulli} gives an almost complete
solution to the problem of determining the dimension of Bernoulli convolutions.
In addition, there are numerical algorithms to compute $\dim\nu_\l$ with arbitrary
precision for any given algebraic $\l$, see \cite{AFKP}, \cite{HKPS}, \cite{KPV}
and \cite{FF}.
However, it is still not known precisely what is the set
of algebraic parameters $\l\in(1/2,1)$
for which $\dim\nu_\l<1$.

We turn to the case of transcendental parameters in Theorem \ref{th:Bernoulli}.
If the IFS $\Phi_\l$ satisfied the exponential separation property whenever it does
not contain exact overlaps, then Theorem \ref{th:Bernoulli} would follow at once from
Theorem \ref{th:exp-separation}.
This very well could be true; however, this is still an open problem, which seems to
be beyond reach of existing methods.

In fact, the decay rate of $\Delta_n(\Phi_\l)$ is very closely related to a problem
in Diophantine approximation, which is the separation between the elements of the
set
\[
\cE^{(n)}:=
\{\eta:\text{ $P(\eta)=0$ for some polynomial $P\in\cP^{(n)}$}\},
\]
where $\cP^{(n)}$ is the set of polynomials of degree at most $n-1$ with coefficients $-1,0,1$.
As it will be clear from what follows, the set
\[
\cE:=\bigcup_n \cE^{(n)}\cap(0,1)
\]
is precisely the set of parameters for which $\Phi_\l$ contains exact overlaps.

We begin our discussion of the proof of Theorem \ref{th:Bernoulli} by explaining the connection
between the behaviour of $\Delta_n(\Phi_\l)$ and the separation properties of the sets
$\cE^{(n)}$ following Hochman \cite{Hoc-R}*{Question 1.10}.
This can be formalized as follows.

\begin{lem}\label{lm:separation}
If it is true that the elements of $\cE^{(n)}$ are separated by at least $C^{-n}$ for some 
constant $C$ for all $n$, then the exponential separation property holds
for the IFS $\Phi_\l$ whenever it lacks exact overlaps.
\end{lem}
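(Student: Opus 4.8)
The plan is to reformulate $\D_n(\Phi_\l)$ in terms of the distance from $\l$ to $\cE^{(n)}$, and then run a stabilisation argument powered by the separation hypothesis. First I would write out the translation part of an $n$-fold composition $\f_{i_1}\circ\cdots\circ\f_{i_n}$: it equals $\sum_{j=1}^{n}t_{i_j}\l^{j-1}$ with each $t_{i_j}\in\{0,1\}$. The difference of two such sums is $P(\l)$ for some $P\in\cP^{(n)}$, and it is nonzero precisely when the two compositions are distinct and $\Phi_\l$ has no exact overlap. Hence, under the assumption $\l\notin\cE$,
\[
\D_n(\Phi_\l)=\min\{\,|P(\l)|:0\ne P\in\cP^{(n)}\,\}>0 .
\]

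Next I would relate this to $\rho_n:=\min\{\,|\l-\eta|:\eta\in\cE^{(n)}\,\}$, the distance from $\l$ to $\cE^{(n)}$ inside $\C$. A nonzero $P\in\cP^{(n)}$ factors as $\pm\prod_k(x-\eta_k)$, its leading coefficient being $\pm1$, and each root $\eta_k$ lies in $\cE^{(n)}$, so $|P(\l)|=\prod_k|\l-\eta_k|\ge\rho_n^{\deg P}$; since roots of $\{-1,0,1\}$-polynomials satisfy $|\eta|\le2$, one also has $\D_n(\Phi_\l)\le3^n\rho_n$. The lower bound $\D_n\ge\rho_n^{\,n-1}$ is far too lossy on its own, so I would sharpen it by observing that a $\{-1,0,1\}$-polynomial cannot have many roots clustered near $\l$: the product of its nonzero roots has absolute value $1$, while each root in the fixed ball $B(\l,r_0)$ with $r_0:=\tfrac12\min(\l,1-\l)$ has modulus bounded away from $1$, so together with $M(P)\le\|P\|_2\le\sqrt n$ for the Mahler measure one sees that $P$ has at most $O_\l(\log n)$ roots, with multiplicity, in $B(\l,r_0)$. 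Splitting off those few roots gives
\[
\D_n(\Phi_\l)\ \ge\ \rho_n^{\,O_\l(\log n)}\,r_0^{\,n}.
\]

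Now suppose, for contradiction, that the exponential separation property fails, i.e.\ $\D_n(\Phi_\l)^{1/n}\to0$. Feeding this into the previous display forces $\rho_n\to0$, in fact $\rho_n$ must decay faster than $e^{-Mn/\log n}$ for every constant $M$. Let $\eta_n\in\cE^{(n)}$ realise $\rho_n=|\l-\eta_n|$. If $\eta_n\ne\eta_{n+1}$, then both points lie in $\cE^{(n+1)}$, so the separation hypothesis gives $C^{-(n+1)}\le|\eta_n-\eta_{n+1}|\le\rho_n+\rho_{n+1}\le2\rho_n$; hence whenever $\rho_n<C^{-(n+1)}/2$ the nearest point to $\l$ does not move at step $n$, so $\eta_{n+1}=\eta_n$ and $\rho_{n+1}=\rho_n$. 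Once the $\eta_n$ stabilise, i.e.\ $\eta_n=\eta^\ast$ for all $n\ge n_0$, we obtain $\l=\lim_n\eta_n=\eta^\ast\in\cE^{(n_0)}\cap(0,1)\subseteq\cE$, contradicting the absence of exact overlaps.

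The step requiring real care — and which I expect to be the main obstacle — is reconciling the rate at which $\rho_n\to0$ with the genuinely exponential smallness $\rho_n<C^{-(n+1)}/2$ needed to trigger the stabilisation, since the refined lower bound controls the decay of $\rho_n$ only up to the stray $\log n$ in the exponent. One has to split into regimes: when $\rho_n$ is only sub-exponentially small the bound $\D_n\ge\rho_n^{\,O_\l(\log n)}r_0^{\,n}$ already yields $\D_n(\Phi_\l)>c^n$ for infinitely many $n$, directly contradicting the failure of exponential separation; when $\rho_n$ is exponentially small one runs the stabilisation argument above, if necessary tracking the nearest point $\eta_n$ across a whole block of consecutive levels rather than a single step, so that the factor $C$ lost at each comparison is absorbed. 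Making this dichotomy, and the attendant multiplicity bookkeeping, precise is the technical heart of the proof; the rest is the soft reduction described above.
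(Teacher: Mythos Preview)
Your overall strategy---relating $\Delta_n$ to the distance $\rho_n$ from $\lambda$ to $\cE^{(n)}$ and then exploiting the separation hypothesis---is the right one and matches the paper's approach. The gap is exactly where you sense it: the multiplicity bound you prove, $O_\lambda(\log n)$ roots of $P$ in $B(\lambda,r_0)$, is too weak, and the resulting inequality $\Delta_n \ge \rho_n^{\,O(\log n)} r_0^{\,n}$ cannot close the argument. Concretely, your first regime in the dichotomy is false as stated: if $\rho_n\ge A^{-n}$ for some fixed $A>1$, then $\rho_n^{\,O(\log n)} r_0^{\,n}\le A^{-cn\log n}r_0^{\,n}$ is still super-exponentially small, so you do \emph{not} get $\Delta_n>c^n$ along that subsequence. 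The ``stray $\log n$'' is not a bookkeeping nuisance; it is fatal to the scheme as written.

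The missing observation is that the multiplicity bound is in fact $O_\lambda(1)$, independent of $n$. Your Mahler-measure argument loses a $\log n$ because you feed in $M(P)\le\|P\|_2\le\sqrt n$; instead note that $|P(z)|\le\sum_{j\ge 0}|z|^j=(1-|z|)^{-1}$ is bounded on compact subsets of the open unit disc \emph{uniformly in $n$}, and apply Jensen's formula to count zeros in $B(0,\lambda+r_0)$. With at most $K=K(\lambda)$ roots in $B(\lambda,r_0)$ you obtain $\Delta_n\ge\rho_n^{\,K} r_0^{\,n}$, which is precisely the paper's statement that $|\lambda-\eta|<\Delta_n^{\,\alpha}$ for a fixed $\alpha=\alpha(\varepsilon)$ (this is what the paper imports from \cite{Var-ECM}*{Lemma 5.2}). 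Once you have this, your stabilisation argument runs cleanly with no dichotomy at all: failure of exponential separation gives $\Delta_n^{1/n}\to 0$, hence $\rho_n^{1/n}\to 0$, hence $\rho_n<C^{-(n+1)}/2$ for all large $n$, hence $\eta_n$ is eventually constant and $\rho_n$ is eventually a fixed positive number---contradicting $\rho_n\to 0$. The paper organises the endgame slightly differently (given one level $n$ with $\Delta_n$ tiny, it jumps directly to a level $\tilde n$ determined by $|\lambda-\eta_n|$ and shows $\Delta_{\tilde n}$ is exponentially bounded below there), but the substantive input is the same fixed-$\alpha$ estimate.
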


\begin{proof}[Sketch proof]
Fix some $\e>0$ and assume $\l\in(\e,1-\e)$.
We first observe that if $\Delta_n(\Phi_\l)<C^{-n}$ for some $C=C(\e)$, then
there is some $\eta\in\cE^{(n)}$ with
\[
|\l-\eta|<\Delta_n(\Phi_\l)^{\a}
\]
for some $\a=\a(\e)>0$.
This follows from the fact that the translation component of an $n$-fold composition of the
maps in $\Phi_\l$ in some order is a polynomial in $\l$ of degree at most $n-1$ with coefficients
$0,1$.
This means that $\Delta_n(\Phi_\l)=P(\l)$ for some $P\in\cP^{(n)}$ that also depends on $\l$.
To complete the proof of our observation, we need to argue that the only way
$P(\l)$ can be very small is if $\l$ is close to a root of $P$.
For more details see \cite{Var-ECM}*{Lemma 5.2}.

Now suppose that $\l$ is such that $\Delta_n(\Phi_\l)<C_2^{-2n/\a}$ for some $n$,
where $\a$ is as in the previous paragraph and $C_2$ is the constant $C$ in the
assumption about the separation between the elements of $\cE_n$.
Then there is $\eta_n$
such that $|\l-\eta_n|<C_2^{-2n}$.
If $\Phi_\l$ contains no exact overlaps, then $\l\notin \cE$ so $\l\neq \eta_n$.
Now we take the smallest integer $\wt n>n$ such that $|\l-\eta_n|> C_2^{-2\wt n}$.
It follows by the assumed separation property on $\cE^{(\wt n)}$, that there is no
$\eta_{\wt n}\in\cE^{(\wt n)}$ with $|\l-\eta_{\wt n}|< C_2^{-2\wt n}$.
This means that $\Delta_{\wt n}(\Phi_\l)\ge C_2^{-2\wt n/\a}$, and the exponential separation
property follows.
\end{proof}

It is not known whether or not the elements of $\cE^{(n)}$ are exponentially separated.
The best lower bound known for the minimal distance of the elements of $\cE^{(n)}$ is
$\exp(-C n\log n)$ for some constant $C$ (one could take e.g. $C=4$),
which is due to Mahler \cite{Mah}.
This yields via the argument in the proof of Lemma
\ref{lm:separation} that for all $\l$ such that $\Phi_\l$ contains
no exact overlaps, there are infinitely many values of $n$ with
\begin{equation}\label{eq:nadn}
\Delta_n(\Phi_\l)\ge \exp(-Cn\log n)
\end{equation}
for some (other) constant $C$.

One may wonder if this weaker separation condition could be used in a refined
form of Hochman's argument in place of exponential separation.
This has been done in \cite{BV-transcendent}, however, the argument requires that there are
several values of $n$ sufficiently close to each other such that the separation
\eqref{eq:nadn} holds.
Such a condition can be satisfied if we assume that $\l$ is not approximated
too closely by elements of $\cE^{(n)}$.
Indeed, in the above argument the size of $\wt n$ is controlled by the distance between
$\l$ and $\cE^{(n)}$.
More precisely, the following was proved in \cite{BV-transcendent}.

\begin{thm}[Beruillard, Varj\'u]\label{th:BV2}
Let $\l\in(1/2,1)$ be such that Conjecture \ref{cn:EO-measure} does not
hold for $\Phi_\l$.
Then there is $\d>0$ and there are infinitely many values of $n$
such that there is $\eta_n\in\cE^{(n)}\cap(1/2,1)$ with
\begin{align*}
|\l-\eta_n|<\exp(-n^{100}),\\
\dim \nu_{\eta_n}<1-\d.
\end{align*}
\end{thm}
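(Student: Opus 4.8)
The plan is as follows. For $\l\in(1/2,1)$ the value predicted by Conjecture~\ref{cn:EO-measure} is $1$, so the hypothesis says exactly that $\Phi_\l$ has no exact overlaps and $\dim\nu_\l<1$; in particular $\l$ is transcendental by Corollary~\ref{cr:algebraic}. Fix $c_0:=(1-\dim\nu_\l)/4>0$. Writing $\mu_n$ for the law of $\sum_{j<n}\xi_j\l^j$, one has $\nu_\l=\mu_n*(\l^n\nu_\l)$ with the second factor supported on an interval of length $O(\l^n)$, so the scale-$\l^n$ entropy of $\mu_n$ agrees with that of $\nu_\l$ up to $O(1)$; since $\nu_\l$ is exact dimensional of dimension $1-4c_0$, this gives $H(\mu_n;\l^n)\le(1-3c_0)\,n\log\l^{-1}$ for all large $n$. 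In words, the $2^n$ numbers $\sum_{j<n}\xi_j\l^j$ \emph{cluster substantially} at scale $\l^n$, and it is this clustering --- the mechanism responsible for $\dim\nu_\l<1$ --- that must be transported to a nearby algebraic parameter.

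The main step is to turn this clustering into a super-polynomially strong near-overlap: I would show that for infinitely many $n$ there is a nonzero $P_n\in\cP^{(n)}$ with $|P_n(\l)|<\exp(-n^{200})$. Failure of exponential separation --- a consequence of Theorem~\ref{th:exp-separation} since $\dim\nu_\l<1$ --- already forces $\Delta_n(\Phi_\l)<\exp(-\o(n)\cdot n)$ for some $\o(n)\to\infty$, and as in the sketch of Lemma~\ref{lm:separation} one has $\Delta_n(\Phi_\l)=|P(\l)|$ for a nonzero $P\in\cP^{(n)}$; but this decay is far too slow on its own. To reach the much stronger bound one feeds the scale-$\l^n$ deficiency into Hochman's multiscale entropy-increment machinery, in the refined quantitative form of \cite{BV-transcendent}: if the level-$m$ configurations of $\l$ failed to concentrate far below scale $\l^m$ on a whole window of scales $m$, the entropy of $\nu_\l$ would be forced to grow there, contradicting the deficiency. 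One simultaneously exploits the rigidity of the exact-overlap locus: by Mahler's separation bound \cite{Mah} the points of $\cE^{(m)}$ lie at least $\exp(-Cm\log m)$ apart, so a parameter whose configurations keep concentrating across a long range of scales is driven super-exponentially close to a single algebraic point, and this is what manufactures $P_n$. This bootstrap, marrying Hochman's machinery to the Diophantine geometry of $\cE$, is the heart of the argument and the step I expect to be the main obstacle.

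Granted $P_n$, the elementary estimate relating the smallness of a polynomial to proximity to its roots (as used in the proof of Lemma~\ref{lm:separation}, applied here to a polynomial of degree $<n$) produces a root $\eta_n$ of $P_n$ with $|\l-\eta_n|<\exp(-n^{100})$; the approximation being this tight and $\l\in(1/2,1)$, we get $\eta_n\in(1/2,1)$ for all large $n$, and $\eta_n\in\cE^{(n)}$ by construction. It remains to transfer the deficiency. Since $|\l-\eta_n|<\exp(-n^{100})$, for every $m\le n^{10}$ and every sign pattern the values $\sum_{j<m}\xi_j\l^j$ and $\sum_{j<m}\xi_j\eta_n^j$ differ by less than $\exp(-n^{50})$, which is far below the scale $\l^m$; averaging over the placement of the scale-$\l^m$ partition, the scale-entropies for $\l$ and for $\eta_n$ agree up to $o(m)$, so the $\eta_n$-configurations are themselves deficient over this whole range of scales. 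Using the self-similar structure to upgrade a single scale of deficiency to all scales, and --- crucially --- that $\eta_n$ is algebraic (indeed an algebraic integer, being a root of a monic-up-to-sign $\{-1,0,1\}$-polynomial), so that Hochman's formula Theorem~\ref{th:Hoc-algebraic} applies and $\dim\nu_{\eta_n}=\min(1,h(\Phi_{\eta_n})/\log\eta_n^{-1})$, this finite-scale deficiency converts to $\dim\nu_{\eta_n}\le 1-c_0$. Taking $\d:=c_0$ completes the proof.
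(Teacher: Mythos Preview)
Your plan separates the argument into (A) manufacturing a single $P_n\in\cP^{(n)}$ with $|P_n(\l)|<\exp(-n^{200})$, and then (B) transferring the dimension drop to the root $\eta_n$. The genuine gap is in (A). Your justification invokes ``the refined quantitative form of \cite{BV-transcendent}'', but that paper \emph{is} the proof of Theorem~\ref{th:BV2}, so this is circular. The mechanism you sketch --- concentration across a window of scales plus Mahler's separation in $\cE^{(m)}$ forces $\l$ super-exponentially close to a single algebraic point --- does not deliver the claimed bound: even if all the near-roots over a long window of $m$'s coincide with one $\eta$, the proximity $|\l-\eta|$ you extract is only of order $\Delta_m^\a$ for some $m$ in the window, and nothing in your sketch drives $\Delta_m$ itself below $\exp(-Cm\log m)$. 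In effect your ``main step'' is the theorem.

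The paper's argument is organised differently and never needs a single super-exponentially small $P_n(\l)$. It works in the contrapositive, and its key device --- which you do not mention --- is the inequality \eqref{eq:ent-eta}: at the modest scale $r\sim\exp(-Cn\log n)$, Mahler's bound is used not to separate $\cE^{(n)}$ from $\l$ but to show that \emph{all} near-collision polynomials $P\in\cP^{(n)}$ with $|P(\l)|\le r$ share a \emph{common} root $\eta_n$, so that every scale-$r$ collision for $\l$ becomes an \emph{exact} collision for $\eta_n$. This bounds the Shannon entropy $H(\sum_{j<n}\xi_j\eta_n^j)$ directly by the scale-$r$ entropy at $\l$, hence $h(\Phi_{\eta_n})$ and $\dim\nu_{\eta_n}$ via Theorem~\ref{th:Hoc-algebraic}, with no metric transfer needed. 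The contrapositive hypothesis then gives $|\l-\eta_n|>\exp(-n^{100})$, which is fed back to produce entropy lower bounds \eqref{eq:ent-bound} at a sufficiently dense set of scales for Hochman's increment argument to force $\dim\nu_\l=1$. Note also that your reasoning in (B) is off: scale-$\eta_n^m$ entropy deficiency does \emph{not} bound $h(\Phi_{\eta_n})$, since Shannon entropy dominates scale entropy, so Theorem~\ref{th:Hoc-algebraic} is not the tool you want there; the conclusion can be salvaged via subadditivity of $m\mapsto H(\nu_{\eta_n};\eta_n^m)$, but that is a side issue next to the gap in (A).
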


The exponent $100$ can be replaced by any other number, or even by a slowly growing function
of $n$, see \cite{BV-transcendent} for details.
This result along with Theorem \ref{th:exp-separation} are major ingredients in the proof of Theorem \ref{th:Bernoulli}.
Given some $\l\in(1/2,1)$ such that $\Phi_\l$ lacks exact overlaps, it can be shown that
$\l$ has only finitely many approximants $\eta_n$ as in the conclusion of Theorem \ref{th:BV2}
or else $\Phi_\l$ satisfies the exponential separation property.
In either case, Conjecture \ref{cn:EO-measure} follows for $\Phi_\l$ from one of
Theorems \ref{th:exp-separation} or \ref{th:BV2}.

Before we discuss the details of how this can be done, a further remark about Theorem
\ref{th:BV2} is in order.
We have seen that if $\Delta_n(\Phi_\l)<C^{-n}$ for some $n$ and $\l$ with an appropriate
constant $C$, then $\l$ is approximated by some $\eta\in\cE^{(n)}$.
However, we claim some additional properties of this $\eta$
in Theorem \ref{th:BV2}, most importantly
that $\dim\nu_{\eta}<1-\d$.
Now we indicate how this can be deduced.
This leads us to a somewhat lengthy digression; however, it also gives us the opportunity to
introduce several concepts and ideas that will be needed later on.

Already in Theorem \ref{th:exp-separation},
the exponential separation property can be relaxed
(see \cite{Hoc-R}*{Theorems 1.3 and 1.4}).
Instead of assuming
$\Delta_n(\Phi)>C^{-n}$, it is enough to know
that there are not too many pairs of $n$-fold compositions
of maps in $\Phi$ whose translation components are closer than $C^{-n}$.
Likewise in the proof of Theorem \ref{th:BV2}, we work with
a similarly relaxed version of \eqref{eq:nadn}.

To properly quantify this, we use entropy.
Let $X$ be a bounded real valued random variable and let $r\in\R_{>0}$.
The entropy of $X$ at scale $r$ is defined as
\[
H(X;r)=H(\lfloor r^{-1} X\rfloor),
\]
where $H(\cdot)$ on the right is Shannon entropy.
This is the entropy of $X$ with respect to a partition of $\R$ into consecutive
intervals of length $r$.
The choice of this partition is not canonical, and we obtain different values of
$H(X;r)$ by translating $X$.
There are advantages of averaging over translations of $X$ in the definition of
$H(X;r)$, as it is done e.g. in \cite{BV-transcendent}, \cite{Var-ac}  and subsequent papers;
however, we ignore this point here for
the sake of simplicity.

By definition, $\Delta_n(\Phi_\l)>r$ implies
that the points in the support of $\sum_{j=0}^{n-1}\xi_j \l^j$ are separated
by a distance of at least $r$, hence
\[
H\Big(\sum_{j=0}^{n-1}\xi_j \l^j;r\Big) = \log (2) \cdot n.
\]
In the proof of Theorem
\ref{th:BV2}, instead of working with lower bounds on $\Delta_n(\Phi_\l)$
like \eqref{eq:nadn},
we work with bounds of the type
\begin{equation}\label{eq:ent-bound}
H\Big(\sum_{j=0}^{n-1}\xi_j \l^j;r\Big) \ge \b  n
\end{equation}
with suitable $\b$ and $r$.

Now consider some $\l>1/2$ that lacks the approximations $\eta_n$ as
described in the conclusion in Theorem \ref{th:BV2}.
We discuss how this assumption can be used to show that bounds of the type
\eqref{eq:ent-bound} hold for suitably many different values of $n$.
Using such bounds and arguments based on Hochman's proof of Theorem \ref{th:exp-separation},
which we do not discuss in this paper,
it can be shown that $\dim\nu_\l=1$ proving (the contrapositive of)
Theorem \ref{th:BV2}.

In short, the failure of \eqref{eq:ent-bound} with a suitably small $r$
implies that $\l$ can be approximated by some $\eta_n\in\cE^{(n)}$
such that $\Phi_{\eta_n}$ has enough exact overlaps to force
$\dim\nu_{\eta_n}\le \b/\log\eta_n^{-1}$.

We give some more details.
For every pair of numbers $x_1,x_2$ in the support of $\sum_{j=0}^{n-1}\xi_j \l^j$
such that $|x_1-x_2|\le r$, there is a polynomial $P\in\cP^{(n)}$ such that
\[
|x_1-x_2|=|P(\l)|\le r.
\]
As we have already seen, all such polynomials have a root near $\l$ provided
$r<C^{-n}$ for a suitable constant $C$.
If $r<\exp(-Cn\log n)$ for another suitable $C$, then all the roots obtained
this way as $(x_1,x_2)$ goes over all pairs of points in the
support of $\sum_{j=0}^{n-1}\xi_j \l^j$
that are at distance not more than $r$ can be shown to coincide.
This follows from Mahler's aforementioned bound on the separation of elements
in $\cE^{(n)}$.
For an alternative argument, see \cite[Section 3]{BV-transcendent}.
 
Now it follows that if
\[
H\Big(\sum_{j=0}^{n-1}\xi_j \l^j;r\Big) < \log(2)\cdot  n
\]
for some $r<\exp(-Cn\log n)$, then there is some $\eta_n\in\cE^{(n)}$ close to $\l$
(the common root of the polynomials discussed in the previous paragraph) such that
\begin{equation}\label{eq:ent-eta}
H\Big(\sum_{j=0}^{n-1}\xi_j \eta_n^j\Big)\le H\Big(\sum_{j=0}^{n-1}\xi_j \l^j;r\Big).
\end{equation}
Notice that on the left, there is no designated scale, so $H(\cdot)$ stands
for Shannon entropy there.
Provided $H(\sum_{j=0}^{n-1}\xi_j \l^j;r)$ is sufficiently small, this can be
turned into a bound on $\dim \nu_{\eta_n}$ with the help of Theorem \ref{th:Hoc-algebraic}.
Indeed, combining our observations, we see that
\[
H\Big(\sum_{j=0}^{n-1}\xi_j \l^j;r\Big) \le \b  n
\]
implies
\[
\dim\nu_{\eta_n}\le \frac{h(\Phi_{\eta_n})}{\log\eta_n^{-1}}
\le \frac{H(\sum_{j=0}^{n-1}\xi_j \eta_n^j)}{n\log\eta_n^{-1}}
\le \frac{H(\sum_{j=0}^{n-1}\xi_j \l^j;r)}{n\log\eta_n^{-1}}
\le\frac{\b}{\log\eta_n^{-1}}.
\]

By the assumption that $\l$ lacks the approximations as in the conclusion
of Theorem \ref{th:BV2}, we conclude $|\l-\eta_n|>\exp(-n^{100})$.
As we have already discussed, this implies that we can find an $\wt n$
not larger than $n^{100}$ such that even \eqref{eq:nadn} holds
with $\wt n$ in place of $n$.
This provides a sufficiently plentiful supply of numbers $n$ such that
at least a bound of the type \eqref{eq:ent-bound} holds.

We return to the proof of Theorem \ref{th:Bernoulli}.
We suppose to the contrary that $\l\in(1/2,1)$
is a counterexample to Conjecure \ref{cn:EO-measure}.
By Theorem \ref{th:BV2}, there are infinitely many approximants $\eta_n$ to $\l$
satisfying the conclusion of that theorem.
We fix such an $\eta_n$ corresponding to a suitably large $n$.

By virtue of \eqref{eq:alg-dim}, we have 
$h(\Phi_{\eta_n})\le(1-\d)\log\eta_n$.
Our next step is to convert this information to something that is easier to exploit
with the methods of Diophantine Approximation.
We introduce a definition for this purpose.
The Mahler measure of an algebraic number $\eta$ with minimal polynomial
$a_d(x-\eta^{(1)})\cdots(x-\eta^{(d)})\in\Z[x]$ is defined as
\[
M(\eta)=|a_d|\prod_{j=1}^d\max(1,|\eta^{(j)}|),
\]
i.e. it is the product of the absolute values of the leading coefficient and
the roots outside the unit disk.
This quantity is widely used in number theory as a measure of the `complexity' of
$\eta$.
Notice that if $\eta\in\Q$, then $M(\eta)$ is the maximum of the absolute values of
the numerator and the denominator of $\eta$.

Breuillard and Varj\'u \cite{BV-entropy} found a connection between the entropy rate and
the Mahler measure.
A form of this most suited for the proof of Theorem \ref{th:Bernoulli} is the following.

\begin{thm}[Breuillard, Varj\'u]\label{th:BV1}
For any $h\in(0,\log 2)$, there is a number $C(h)$ such that $h(\Phi_\eta)\le h$
implies $M(\eta)<C(h)$ for all algebraic numbers $\eta$.
\end{thm}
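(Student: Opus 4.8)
The plan is to prove the contrapositive in quantitative form: to exhibit a function $\e(m)$ with $\e(m)\to 0$ as $m\to\infty$ such that $h(\Phi_\eta)\ge\log 2-\e(M(\eta))$ for every algebraic $\eta$, after which one takes $C(h)=\sup\{m:\e(m)\ge\log 2-h\}$. We may assume $h(\Phi_\eta)<\log 2$, for otherwise there is nothing to prove; but $h(\Phi_\eta)<\log 2$ forces the map $\omega\mapsto\sum_{j<n}\omega_j\eta^j$ on $\{0,1\}^n$ to be non‑injective for some $n$, hence $\eta$ is a root of a nonzero polynomial in $\cP^{(n)}$, i.e.\ $\eta\in\cE$. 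Two structural facts then enter. First, a polynomial with coefficients in $\{-1,0,1\}$ has leading coefficient $\pm1$, so it cannot be divisible by the minimal polynomial $m_\eta$ unless $m_\eta$ is monic; thus $\eta$ must be an algebraic integer, and all the mass of the Mahler measure is archimedean: $M(\eta)=\prod_{|\eta^{(i)}|>1}|\eta^{(i)}|$, the product over the Galois conjugates of $\eta$ outside the closed unit disk. Second, every root of a nonzero $\{-1,0,1\}$‑polynomial lies in the annulus $1/2<|z|<2$ (compare the leading, resp.\ constant, coefficient with the sum of the rest), so every conjugate of $\eta\in\cE$ lies there; in particular $M(\eta)\le 2^{\deg\eta-1}$, and the content of the theorem is to bound the product $\prod_{|\eta^{(i)}|>1}|\eta^{(i)}|$ in a way that does not see $\deg\eta$.

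Next, a reduction to counting polynomials. Writing $P_\omega=\sum_{j<n}\omega_j x^j$, we have $P_\omega(\eta)=P_{\omega'}(\eta)$ iff $m_\eta\mid(P_\omega-P_{\omega'})$ in $\Z[x]$ (Gauss's lemma, $m_\eta$ being primitive). Hence the fibre of $\omega\mapsto\sum_{j<n}\xi_j\eta^j$ through any point has size at most
\[
N_n:=\#\{S\in\Z[x]:\ \deg S<n,\ \|S\|_\infty\le 1,\ m_\eta\mid S\},
\]
so $H\big(\sum_{j<n}\xi_j\eta^j\big)\ge n\log 2-\log N_n$ and therefore
\[
h(\Phi_\eta)\ \ge\ \log 2-\limsup_{n\to\infty}\tfrac1n\log N_n .
\]
It thus suffices to bound $\g(m_\eta):=\limsup_n\frac1n\log N_n$ by a quantity that tends to $0$ as $M(\eta)\to\infty$.

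The heart of the matter is this estimate. Write $m=m_\eta$, $d=\deg m$, and let $A$ be the companion matrix of $m$, whose eigenvalues are the $\eta^{(i)}$; let $A_u$ be its unstable block (eigenvalues of modulus $>1$), so $|\det A_u|=M(\eta)$. A polynomial counted by $N_n$ is $S=mQ$ with $Q\in\Z[x]$, $\deg Q\le n-1-d$; recovering the coefficients of $Q$ through the convolution recursion attached to $m$ identifies the admissible $Q$ with the length‑$n$ orbit segments of the affine dynamics $v_{k+1}=Av_k+b_k$ ($b_k$ bounded, coming from the digits of $S$), subject to $v_0$ lying in a bounded set of lattice points and to the orbit not escaping in the unstable directions within $n$ steps. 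Equivalently, in Galois coordinates: $\sum_{j<n}\xi_j\eta^j$ determines $\big(\sum_{j<n}\xi_j(\eta^{(i)})^j\big)_i$, and rescaling the coordinates with $|\eta^{(i)}|>1$ by the invertible map $A_u^{-(n-1)}$ yields the level‑$n$ iterate of the contracting self‑affine system $\{x\mapsto A_u^{-1}x,\ x\mapsto A_u^{-1}x+\mathbf 1\}$, whose linear part has determinant $M(\eta)^{-1}$. Either way one must count how many of the $\le 3^n$ level‑$n$ combinations overlap: since the ambient region has bounded volume while a level‑$n$ cylinder has volume comparable to $M(\eta)^{-n}$, the overlap multiplicity is at most $(C/M(\eta))^{n+o(n)}$ for an absolute constant $C$ — trivially so once $M(\eta)\ge C$, the cylinders then being essentially disjoint. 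This gives $\g(m_\eta)\le\log^{+}\!\big(C/M(\eta)\big)$ and hence $h(\Phi_\eta)\ge\log 2-\log^{+}\!\big(C/M(\eta)\big)$, which tends to $\log 2$ as $M(\eta)\to\infty$, completing the plan.

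The main obstacle is the last step: the volume heuristic must be made rigorous with an honest constant. This requires controlling level‑$n$ cylinders of a \emph{non‑conformal} self‑affine system and, crucially, showing that the conjugates of $\eta$ lying on the unit circle — the Salem‑type case, outside the block $A_u$ — contribute only a subexponential factor, so that the central directions of $A$ do not spoil the count. One route is to invoke exponential separation of the rescaled system, which holds because the parameters $\eta^{(i)}$ are algebraic (the mechanism of Corollary~\ref{cr:algebraic}), together with higher‑dimensional counterparts of Theorem~\ref{th:exp-separation}; a more hands‑on route controls the fibres directly, via a Garsia‑type lower bound $|P(\eta)|\gtrsim M(\eta)^{-n}$ for nonzero $P\in\cP^{(n)}$ combined with a pigeonhole argument in the conjugate coordinates. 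Either way, recovering a clean, absolute $C(h)$ is delicate; but for the applications in this survey the weaker conclusion that $C(h)$ is merely \emph{finite} already suffices, and this follows from any bound of the shape $h(\Phi_\eta)\ge\log 2-\e(M(\eta))$ with $\e(m)\to 0$, which the above scheme is designed to produce.
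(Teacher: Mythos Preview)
The paper does not give its own proof of this theorem: immediately after the statement it refers the reader to \cite{Var-Bernoulli}*{Theorem 9} for the derivation from the technical results of \cite{BV-entropy}. So there is no in-paper argument to compare against, and your proposal must be judged on its own merits.

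Your reduction is sound and standard: the observations that $h(\Phi_\eta)<\log 2$ forces $\eta\in\cE$, that $\eta$ must then be an algebraic integer with all conjugates in the annulus $\tfrac12<|z|<2$, and that
\[
h(\Phi_\eta)\ \ge\ \log 2-\limsup_{n\to\infty}\tfrac1n\log N_n,
\qquad
N_n=\#\{S\in\cP^{(n)}:\ m_\eta\mid S\},
\]
are all correct. The difficulty is exactly where you place it, but the gap is more serious than you indicate.

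The volume heuristic does not bound $N_n$. Knowing that the $2^n$ rescaled cylinders in the unstable coordinates have total volume $(2/M(\eta))^{n}$ controls an \emph{average} overlap, not the size of the single fibre over $0$; nothing in the geometry prevents an exponential number of cylinders from clustering at one point while the rest are disjoint. Your two proposed fixes inherit this problem. The Garsia-type inequality $|P(\eta)|\gtrsim M(\eta)^{-n}$ only applies when $P(\eta)\neq 0$, so it cannot by itself limit how many $P\in\cP^{(n)}$ satisfy $P(\eta)=0$; and in any case its implicit constant depends on the distance of the conjugates of $\eta$ to the unit circle, which is not uniform over $\{M(\eta)\ge C\}$. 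The higher-dimensional Hochman route would produce a \emph{dimension} statement for a self-affine measure attached to the conjugates, but extracting a lower bound on the \emph{entropy rate} uniform in $\eta$ from that is precisely the hard step, and again the exponential separation constants you would need depend on the specific $\eta$. In short, your scheme produces a bound of the form $h(\Phi_\eta)\ge\log 2-\e_\eta$ with $\e_\eta$ depending on $\eta$ through the configuration of its conjugates, not merely through $M(\eta)$; getting from there to $\e(M(\eta))$ with $\e(m)\to 0$ is the whole content of the theorem.

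For orientation: the argument in \cite{BV-entropy} does not proceed via the self-affine picture you sketch. It exploits the Galois invariance $h(\Phi_\eta)=h(\Phi_{\eta'})$ for conjugates $\eta,\eta'$, together with entropy inequalities for random walks on the affine group, to compare $h(\Phi_\eta)$ against contributions from each archimedean place; the delicate point --- handling conjugates on or near the unit circle --- is dealt with by these random-walk techniques rather than by a geometric covering argument. The passage from those estimates to the precise formulation above is what \cite{Var-Bernoulli}*{Theorem 9} supplies.
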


See \cite{Var-Bernoulli}*{Theorem 9} for the details of how this
follows from the technical results
of \cite{BV-entropy}.

Using this theorem, we conclude that $M(\eta_n)<C$ for a constant $C$ that only depends on
$\l$, but not on $n$.
Furthermore, recall that we have $|\l-\eta_n|<\exp(-n^{100})$.
Now we use the following, which follows easily from a more general result of
Mignotte \cite{Mig}.

\begin{thm}[Mignotte]\label{th:Mignotte}
	Let $\eta$ be an algebraic number of degree at most $n$.
	Let $\wt n>n(\log n)^2$ be an integer, and let $\wt \eta\neq \eta\in\cE^{(\wt n)}$.
	Then there is an absolute constant $C$, such that
	\[
	|\eta-\wt\eta|\ge C^{-\wt n} M(\eta)^{-2\wt n}.
	\]
\end{thm}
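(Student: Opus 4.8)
The plan is to deduce Theorem~\ref{th:Mignotte} from the standard lower bound for the distance between two distinct algebraic numbers in terms of their degrees and Mahler measures. Recall that there is an absolute constant $c>0$ such that for distinct algebraic numbers $\a,\b$ of degrees $d_\a,d_\b$ one has
\[
|\a-\b|\ge c^{d_\a+d_\b} M(\a)^{-d_\b}M(\b)^{-d_\a}
\]
(this is a Liouville-type inequality; it follows, for instance, from the fact that $\mathrm{Res}(P_\a,P_\b)$ is a nonzero integer together with a factorization of the resultant into differences of conjugates, which is exactly the circle of ideas used in Mignotte~\cite{Mig}). So the first step is to quote this inequality for $\a=\eta$ and $\b=\wt\eta$.

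Next I would control the two quantities attached to $\wt\eta$, namely its degree and its Mahler measure, using that $\wt\eta\in\cE^{(\wt n)}$. Since $\wt\eta$ is a root of a polynomial $P\in\cP^{(\wt n)}$ of degree at most $\wt n-1$ with coefficients in $\{-1,0,1\}$, its degree $d_{\wt\eta}$ is at most $\wt n-1<\wt n$. For the Mahler measure, $M(\wt\eta)\le M(P)\le\|P\|_2\le\sqrt{\wt n}$ by the submultiplicativity of Mahler measure over factors and the comparison of $M(P)$ with the Euclidean norm of the coefficient vector (Landau's inequality); in particular $M(\wt\eta)\le\sqrt{\wt n}$. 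Plugging these into the Liouville bound gives
\[
|\eta-\wt\eta|\ge c^{\,n+\wt n}\, M(\eta)^{-\wt n}\, M(\wt\eta)^{-n}
\ge c^{\,n+\wt n}\,M(\eta)^{-\wt n}\,(\sqrt{\wt n})^{-n}.
\]
Now I use the hypothesis $\wt n>n(\log n)^2$: it forces $n<\wt n/(\log n)^2$, so $n\log\wt n\le n\log n + n\log\log\ldots$, and more crudely $n\log\sqrt{\wt n}=\tfrac12 n\log\wt n = o(\wt n)$ — in fact $\tfrac12 n\log\wt n\le \wt n$ once $n$ is large, using $n\le \wt n/(\log n)^2$ and $\log \wt n\le \log(\wt n)$. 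Hence $(\sqrt{\wt n})^{-n}\ge e^{-\wt n}$ for $\wt n$ large, and absorbing the various constants and the $c^{n+\wt n}$ factor into a single absolute constant $C$ yields $|\eta-\wt\eta|\ge C^{-\wt n}M(\eta)^{-2\wt n}$, where the exponent $2\wt n$ (rather than $\wt n$) on $M(\eta)$ comfortably absorbs the leftover $e^{-\wt n}$ and any polynomial-in-$\wt n$ slack, since $M(\eta)\ge1$.

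The main obstacle is bookkeeping rather than conceptual: one must make sure the contribution of $\wt\eta$'s own size (its Mahler measure $\le\sqrt{\wt n}$ raised to the power $d_\eta\le n$, and the $c^{n+\wt n}$) is genuinely negligible, which is precisely what the growth condition $\wt n>n(\log n)^2$ buys — without it, the term $(\sqrt{\wt n})^{\,n}$ could dominate and spoil the clean $C^{-\wt n}M(\eta)^{-2\wt n}$ shape. A secondary point to be careful about is the regime of small $\wt n$: the stated inequality is really only meaningful for $\wt n$ large, and one should either restrict to $\wt n\ge \wt n_0$ for an absolute $\wt n_0$ or enlarge $C$ to cover the finitely many small cases. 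Everything else — the Liouville inequality, Landau's inequality $M(P)\le\|P\|_2$, and submultiplicativity of $M$ under division — is classical and can be cited from \cite{Mig} or a standard reference.
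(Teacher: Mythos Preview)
Your approach is correct and is precisely what the paper indicates: it states only that the theorem ``follows easily from a more general result of Mignotte \cite{Mig}'', and the Liouville-type separation bound you invoke, combined with Landau's inequality $M(\wt\eta)\le\sqrt{\wt n}$ and the hypothesis $\wt n>n(\log n)^2$ to control the $(\sqrt{\wt n})^{-n}$ factor, is exactly that deduction. One small correction in your bookkeeping: the leftover $e^{-\wt n}$ should be absorbed into $C^{-\wt n}$ rather than into the extra $M(\eta)^{-\wt n}$ (the latter gives no room when $M(\eta)=1$), but this does not affect the validity of the argument.
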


We finish our discussion of the proof of Theorem \ref{th:Bernoulli}.
Thanks to the approximation of $\l$ by $\eta_n$ this theorem acts as a substitute
for the separation condition between elements of $\cE^{(\wt n)}$ in the proof
of Lemma \ref{lm:separation},
and
we can conclude that $\Delta_{\wt n}(\Phi_\l)>C^{-\wt n}$ for a suitable choice of $\wt n$
for some $C$ independent of $n$.
Now we are in a position to apply Theorem \ref{th:exp-separation} to show that
Conjecture \ref{cn:EO-measure} holds for $\l$, which is our desired contradiction
proving Theorem \ref{th:Bernoulli}.

The original argument in \cite{Var-Bernoulli} used an alternative variant of Theorem \ref{th:Mignotte}, which was
deduced from an observation of Garsia \cite{Gar-arithmetic}
and a transversality argument of Solomyak \cite{Sol}.
It was pointed out by Vesselin Dimitrov that the transversality argument can be replaced
by a simpler version based on Jensen's formula.
This has the advantage that it is applicable in greater generality.
See \cite{RV-3maps}*{Lemmata 2.3 and 4.6} for details.

\section{Failure of exponential separation}\label{sc:failure}

As we discussed in the previous section, it is not known whether Bernoulli convolutions
without exact overlaps satisfy the exponential separation property.
However, they are known to satisfy a slightly weaker lower bound on $\Delta_n$,
and this played an important role in the proof of Conjecture \ref{cn:EO-measure}
for this class of IFS's.

On the other hand, there are some IFS's without exact overlaps
for which it is known that the exponential
separation property fails, and moreover, $\Delta_n$ converges to $0$ in an arbitrarily
fast prescribed way.

\begin{thm}[Baker; B\'ar\'any, K\"aenm\"aki]\label{th:BBK}
Let $(\eta_n)\subset \R_{>0}$.
Then there is an IFS $\Phi$ without exact overlaps
such that $\Delta_n(\Phi)\le \eta_n$ for all $n$.
\end{thm}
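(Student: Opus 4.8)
The plan is to construct the IFS as a perturbation of one with exact overlaps, choosing the perturbation small enough to control $\Delta_n$ while using a genericity/countability argument to avoid exact overlaps altogether. Concretely, start from a very simple IFS known to contain exact overlaps — for instance $\Psi_0 = \{x\mapsto x/3,\ x\mapsto x/3 + 1/2,\ x\mapsto x/3 + 1\}$, or more flexibly a two-map system $\{x\mapsto \l x,\ x\mapsto \l x + t\}$ with a rational ratio producing an overlap at some small level $n_0$. Then introduce a real parameter $s$ (the translation of one of the maps, say) and consider the family $\Phi_s$. The quantity $\Delta_n(\Phi_s)$ is, for each $n$, a minimum over finitely many expressions of the form $|P_j(s)|$ where each $P_j$ is an explicit polynomial (or affine function) in $s$ with bounded coefficients, vanishing at $s=s_0$ when the corresponding pair of words gives an exact overlap in $\Psi_0$. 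By continuity, $\Delta_n(\Phi_s)\to 0$ as $s\to s_0$ for each fixed $n$; the technical content is to make this quantitative and simultaneous in $n$.

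The key steps, in order. First, fix the combinatorial skeleton so that for every $n$ there is at least one pair of distinct length-$n$ words $(i_1,\dots,i_n)\neq(\widetilde i_1,\dots,\widetilde i_n)$ whose two translation components, as functions of $s$, agree at $s=s_0$; this is automatic if $\Psi_0$ has a genuine exact overlap, since that overlap propagates to all larger levels by appending common suffixes. Second, estimate the modulus of continuity: the difference of the two translation components is a polynomial in $s$ of degree $\le n-1$ with coefficients of size $O(C^n)$ for an absolute $C$ (bounded by the contraction factor and translation sizes), so on a neighbourhood of $s_0$ one gets $\Delta_n(\Phi_s)\le (n\,C^n)\,|s-s_0|$ or similar. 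Third, choose a sequence $s_k\to s_0$ with $|s_k - s_0|$ shrinking fast enough (relative to the prescribed $\eta_n$ and to $C^n$): for $s_k$ close enough, $\Delta_n(\Phi_{s_k})\le \eta_n$ for all $n\le k$, while for $n>k$ one still has $\Delta_n(\Phi_{s_k})\le \Delta_k(\Phi_{s_k})\le\eta_k$ and one only needs $\eta_n$-smallness — here one should note $\Delta_n$ is non-increasing in $n$ up to harmless constants, or simply build the target along a subsequence and interpolate. Fourth, ensure no exact overlaps: the set of $s$ for which $\Phi_s$ contains an exact overlap is contained in the zero set of a countable collection of nonzero polynomials (nonzero because generically the two words give different maps), hence is countable; thus in any neighbourhood of $s_0$ there are parameters $s$ with $\Delta_n(\Phi_s)$ as small as we like yet $\Phi_s$ free of exact overlaps. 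A diagonal/nested-interval argument then selects a single $s$ with $\Delta_n(\Phi_s)\le\eta_n$ for all $n$ and no exact overlap.

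The main obstacle is the simultaneity: a naive choice of $s$ handles finitely many $n$, but one must control all $n$ at once, and the bound $\Delta_n(\Phi_s)\lesssim n C^n|s-s_0|$ degrades as $n\to\infty$ for fixed $s\neq s_0$. The resolution is that $\Delta_n$ is essentially monotone decreasing in $n$ (appending a common letter to both words of a near-collision at level $n$ produces a near-collision at level $n+1$ with comparable distance, after rescaling by $\l$), so it suffices to drive $\Delta_{n_k}$ below $\min(\eta_{n_k}, \eta_{n_k+1}, \dots, \eta_{n_{k+1}})$ along a rapidly increasing subsequence $n_k$, and the genericity argument guarantees we never lose the no-exact-overlap property in the limit. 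One should double-check the monotonicity bookkeeping and the uniform coefficient bound, but both are routine; everything else is a standard Baire-category-style construction.
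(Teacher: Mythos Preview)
Your approach has a genuine gap: a one-parameter perturbation cannot produce the required decay of $\Delta_n$ while avoiding exact overlaps. The monotonicity you invoke is correct --- appending a common letter gives $\Delta_{n+1}(\Phi_s)\le\l_{\max}\,\Delta_n(\Phi_s)$ --- but this yields only \emph{exponential} decay. For $s\neq s_0$ near the overlap point $s_0$, the single near-collision you have at level $n_0$ propagates to give $\Delta_n(\Phi_s)\lesssim \l_{\max}^{\,n-n_0}|s-s_0|$, and your argument produces nothing better. Once $s$ is fixed this is a rate of the form $c\,\l_{\max}^{\,n}$ with $c>0$, which cannot beat a prescribed sequence such as $\eta_n=\exp(-n^2)$. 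Your diagonal/nested-interval step does not help: to drive $\Delta_{n_k}(\Phi_s)$ below arbitrarily small thresholds along a sequence $n_k\to\infty$ you would need $s$ to be simultaneously close to overlap parameters at each level $n_k$, but in a one-parameter family these are isolated points (cf.\ the discussion around Corollary~\ref{cr:Sierpinski} and Lemma~\ref{lm:separation}), so any such sequence $s_k$ can only converge to $s_0$ itself --- which has exact overlaps. There is no room left for the Baire-type avoidance you describe.

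The missing idea, emphasised in the paper's heuristic, is to work in a family with at least \emph{two} parameters, so that exact overlaps occur along \emph{curves} in parameter space rather than at isolated points. The crucial phenomenon is that overlap curves at different levels \emph{intersect}: one chooses a segment $\gamma_1$ of a level-$n_1$ overlap curve, then a segment $\gamma_2$ of a level-$n_2$ curve crossing $\gamma_1$, and so on, with $\gamma_{k+1}$ contained in a thin tube around $\gamma_k$. The nested tubes shrink to a point lying near every $\gamma_k$ (forcing $\Delta_n\le\eta_n$ for all $n$) yet on none of the countably many overlap curves (so no exact overlap). This is precisely the mechanism behind the actual constructions of Baker (two translation parameters, via continued fractions) and B\'ar\'any--K\"aenm\"aki (one contraction and one translation parameter, via transversality); your one-parameter setup lacks the positive-dimensional overlap loci needed to make the nesting converge anywhere other than an exact-overlap point.
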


The first examples of such IFS's were given by Baker \cite{Bak-fail1} in the form
\[
\Big\{
x\mapsto \frac{x}{2},
x\mapsto \frac{x+1}{2},
x\mapsto \frac{x+s}{2},
x\mapsto \frac{x+t}{2},
x\mapsto \frac{x+1+s}{2},
x\mapsto \frac{x+1+t}{2}
\Big\}
\]
for suitable choices of the parameters $t,s$,
and by B\'ar\'any, K\"aenm\"aki \cite{BK-fail} in the form
\[
\{
x\mapsto\l x,
x\mapsto\l x+1,
x\mapsto \l x+t
\}
\]
for suitable choices of $\l,t$.
Baker's example was modified by Chen \cite{Che-fail}, who disposed of the last two maps
and replaced the denominator $2$ by an arbitrary real algebraic number not smaller than $2$.
These constructions were further extended by Baker \cite{Bak-fail2}.

In what follows we give a heuristic argument to show why such IFS's with very small
separation may be expected to exist.
Our purpose (due to limitation of space) is not to give insight to the proofs of Theorem \ref{th:BBK}, which 
are based on a variety of tools, such as continued fraction expansions
in \cite{Bak-fail1} and the transversality method in \cite{BK-fail}.
Instead, we just aim to highlight the difference between families of IFS's depending on a single
parameter, such as Bernoulli convolutions, or the examples covered by Corollary \ref{cr:Sierpinski},
and families depending on more than one parameter, which will be discussed in the
next two sections.

Let
\[
\Phi_{x,y}=\{\f_{i,x,y}:i\in\Lambda\}
\] be a family of IFS's (smoothly) depending on two parameters.
Let $n\in\Z_{>0}$, and we write $\Gamma^{(n)}$ for the collection
of curves in the parameter space, which arise as the solution sets of equations of the form
\[
\f_{i_1,x,y}\circ\ldots\circ \f_{i_n,x,y}
=\f_{\wt i_1,x,y}\circ\ldots\circ \f_{\wt i_n,x,y}
\]
in $(x,y)$
where $i_1,\ldots,i_{n}$ and
$\wt i_1,\ldots,\wt i_{n}$ are two distinct sequences of indices in $\Lambda$.
Note that the union of all these curves is the set of all parameter points for which
the IFS contains exact overlaps.

The key difference between this setting and a
family depending on a single parameter
is that exact overlaps occur along curves in the parameter space rather than at isolated
points.
These curves may intersect each other, and then there is no separation between them,
which rules out the arguments presented for the proof of Corollary \ref{cr:Sierpinski} and later
in Section \ref{sc:Bernoulli}.

We now give the heuristic suggesting the existence of the IFS's claimed in Theorem \ref{th:BBK}.
We give a recursive construction.
After the $k$'th step, we will have a sequence
$n_1,\ldots,n_k\in\Z_{\ge 1}$, a sequence $\g_1,\ldots,\g_k$, where $\g_j$ is a
segment of a curve in $\Gamma^{(n_j)}$, and a sequence $\d_1,\ldots,\d_{k-1}\in\R_{>0}$.
These will satisfy the property that $\g_k$ is contained in the $\d_j$ neighbourhood of
$\g_j$ for all $j<k$.

We begin the process by setting $\g_1$ to be any segment (of positive length)
of a curve in $\Gamma^{(1)}$.
Suppose now that $\g_1,\ldots,\g_k$ and $\d_1,\ldots,\d_{k-1}$ are given for some
$k\ge 1$.
We choose a curve $\wt \gamma_{k+1}\in\Gamma^{(n_{k+1})}$ for some $n_{k+1}>n_k$
that intersects $\gamma_k$.
The existence of such a curve is plausible, but requires proof, and this is why
this construction is only a heuristic.
We observe that $\Delta_n(\Phi_{x,y})=0$ for all $n\ge n_k$ and $(x,y)\in\gamma_{k}$.
By continuity, there is a choice of $\d_{k}$ so that
$\Delta_{n}(\Phi_{x,y})\le \eta_n$ holds for all $n\in[n_k,n_{k+1})$ and
$(x,y)$ in the $\d_{k}$ neighbourhood of $\gamma_{k}$.
Finally we set $\gamma_{k+1}$ to be a suitable segment of $\wt\gamma_{k+1}$
contained in the $\d_{j}$ neighbourhood of $\gamma_j$ for all $j\le k$.

It is immediate from the construction that there is a point $(x,y)$ which
is contained in the (closed) $\delta_k$ neighbourhood of $\gamma_k$ for all $k$, and that
$\Delta_n(\Phi_{x,y})\le \eta_n$ for all $n$.

With a small modification of the construction, we can ensure that $\Phi_{x,y}$ contains
no exact overlaps for the resulting parameter point $(x,y)$.
Indeed, observe that $\bigcup\Gamma^{(n)}$ is a countable set,
and let $\g_1^*,\g_2^*,\ldots$
be an enumeration of it.
In the construction, we have considerable liberty in choosing the curve segment $\gamma_k$ so we can
make sure that it does not intersect $\g_k^*$.
(This requires, in particular, that we choose $\wt \g_k$ not to coincide with $\g_k^*$.
The possibility of this is again plausible, but requires proof.)
Then in the next step of the construction we can ensure that $\delta_k$ is chosen to be
sufficiently small so that $\g_k^*$ is entirely outside the $\delta_k$ neighbourhood
of $\gamma_k$.
This way we can ensure that the resulting parameter point $(x,y)$ at the end of the
process is not contained in $\g_k^*$ for any $k$, and hence $\Phi_{x,y}$ is
without exact overlaps.

\section{IFS's with algebraic contraction factors}\label{sc:Rapaport}

In this section we discuss the following result of Rapaport \cite{Rap-algebraic}.

\begin{thm}[Rapaport]\label{th:Rapaport}
Conjectures \ref{cn:EO-set} and \ref{cn:EO-measure} hold for all IFS's
in which all contraction parameters are algebraic numbers.
\end{thm}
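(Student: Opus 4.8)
The plan is to follow the template established for Bernoulli convolutions in Section \ref{sc:Bernoulli}, but with two essential new inputs that handle the extra generality: a higher-dimensional version of Hochman's machinery (because an IFS with algebraic contractions but transcendental translations can encode several independent transcendental parameters), and a replacement for the Diophantine separation estimates that worked so cleanly for the one-parameter family $\Phi_\l$. First I would reduce to Conjecture \ref{cn:EO-measure} for measures, since it implies Conjecture \ref{cn:EO-set} as explained in the introduction via the choice $p_i=\l_i^s$. Fix an IFS $\Phi=\{\f_i:x\mapsto\l_i x+t_i\}$ with all $\l_i$ algebraic, and suppose toward a contradiction that $\dim\mu$ is strictly below the predicted value; by Hochman's Theorem \ref{th:exp-separation} this forces a super-exponential drop in $\Delta_n(\Phi)$ along all sufficiently large $n$ (the relaxed entropy version being what one actually uses).

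The core of the argument should be a parametric analysis in the spirit of Lemma \ref{lm:separation} and Theorem \ref{th:BV2}, carried out now in the space of translation parameters $\mathbf t=(t_i)_{i\in\Lambda}\in\R^{|\Lambda|}$ with the algebraic contractions held fixed. The translation component of an $n$-fold composition is a linear form in the $t_i$ whose coefficients are polynomials in the fixed $\l_i$; hence $\Delta_n(\Phi)$ small means $\mathbf t$ lies near one of countably many affine subspaces $V$ — the loci where a given pair of compositions exactly overlaps. One then wants: (i) near-overlap at level $n$ puts $\mathbf t$ within $\Delta_n^{\a}$ of such a $V$ defined over the field generated by the $\l_i$, with controlled height; (ii) on that $V$ one has a genuine exact-overlap IFS $\Phi'$ whose contractions are still the (algebraic) $\l_i$, so by Hochman's algebraic case — here one needs the multidimensional analogue, or the version of Theorem \ref{th:Hoc-algebraic} adapted to general algebraic IFS's, which falls under Corollary \ref{cr:algebraic} — the entropy rate of $\Phi'$ is strictly deficient, giving $\dim\mu'<$ (predicted value) $-\d$; (iii) a Mignotte-type separation estimate (Theorem \ref{th:Mignotte}, suitably generalized from roots of $\pm1$-polynomials to points on bounded-height affine subspaces over a fixed number field) forcing the defining data of these subspaces to be far apart unless $\mathbf t$ is super-exponentially close to one of them. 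Combining (i)--(iii) as in the Bernoulli proof: either $\mathbf t$ has only finitely many such super-close approximants, in which case $\Delta_n(\Phi)>C^{-n}$ for a plentiful supply of $n$ and Theorem \ref{th:exp-separation} applies directly; or $\mathbf t$ is super-exponentially close to some $V$, and then the Breuillard--Varj\'u entropy/Mahler-measure mechanism (Theorem \ref{th:BV1}) bounds the complexity of the nearby exact-overlap system, after which the Mignotte estimate gives $\Delta_{\wt n}(\Phi)>C^{-\wt n}$ at a nearby scale $\wt n$, again feeding Theorem \ref{th:exp-separation}. Either way one contradicts the assumed dimension drop.

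The main obstacle, I expect, is step (ii) together with the multidimensional bookkeeping in step (i): unlike the Bernoulli case, where an exact overlap pins $\l$ down to a single algebraic number and $\dim\nu_\l<1$ comes for free from a one-variable entropy-rate computation, here the exact-overlap locus is a positive-dimensional subvariety, the induced IFS $\Phi'$ varies along it, and one must show its entropy rate is \emph{uniformly} deficient — with a quantitative gap $\d$ depending only on $\Phi$, not on $n$. This requires a semicontinuity or compactness argument for entropy rates of algebraic IFS's, and a clean statement of Hochman's dimension formula for IFS's on $\R$ with arbitrary (algebraic) contraction ratios rather than a single common one; controlling heights of the subspaces $V$ so that Theorem \ref{th:BV1}-type bounds can be applied is the technical heart. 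The Diophantine input (iii) is, by comparison, a relatively routine generalization of Mignotte's inequality to the distance between a bounded-height number-field point and a bounded-height affine subspace. Once these ingredients are in place, the logical skeleton is exactly that of the proof of Theorem \ref{th:Bernoulli}.
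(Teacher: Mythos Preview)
Your proposal imports the Bernoulli-convolution machinery (Theorems \ref{th:BV2}, \ref{th:BV1}, \ref{th:Mignotte}) into a setting where it does not bite, and in doing so misses the mechanism that actually drives Rapaport's proof. The point of the hypothesis is that the contraction parameters $\l_i$ are \emph{fixed} algebraic numbers; only the translations $t_i$ are potentially transcendental. Consequently the translation component of any $n$-fold composition is a linear form in the $t_i$ with coefficients lying in the ring generated by the $\l_i$, hence algebraic of height exponential in $n$. This places the problem squarely in the regime of Corollary \ref{cr:Sierpinski}, not Theorem \ref{th:Bernoulli}: the Diophantine separation one needs is the elementary height-based repellency of algebraic points, exactly as in the proof sketch following \eqref{eq:IFS-Sierpinski}, and neither Mahler's root-separation bound nor Mignotte's Theorem \ref{th:Mignotte} enters. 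More seriously, your appeal to Theorem \ref{th:BV1} is vacuous here: that theorem bounds $M(\eta)$ for the \emph{contraction} parameter $\eta$ in terms of the entropy rate, but in Rapaport's setting the contractions are given algebraic numbers with fixed Mahler measure, so the inequality yields no constraint on the approximating translation data.

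What the paper actually does (illustrated for the family $\Phi_{s,t}$ with contraction $1/3$ and translations $0,1,s,t$) is a rank dichotomy on the collection $\cA^{(n)}$ of difference linear forms that nearly vanish at $(s_0,t_0)$. If $\cA^{(n)}$ has full rank, one solves two independent forms to pin down an algebraic approximant $(s_n,t_n)$ of bounded height, and the Sierpi\'nski-style repellency argument from Section \ref{sc:exp-sep} gives exponential separation at a nearby scale directly --- no super-exponential approximation, no Theorem \ref{th:BV1}. The genuinely new difficulty is the rank-deficient case, where all near-overlaps lie along a single line $\ell$; this is handled by Proposition \ref{pr:Rapaport}, proved by associating to each such line a self-similar measure $\nu_{\s,\tau}$ in $\R^2$, using lower semicontinuity of dimension to pass to a limit line through $(s_0,t_0)$, and then invoking Hochman's theorem in $\R^d$ \cite{Hoc-Rd}. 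The decisive structural point --- which your outline does not isolate --- is that for the two-dimensional IFS $\Psi_{\s,\tau}$ the ambient dimension matches the number of free parameters, so exact overlaps occur at isolated points with exponential repellency, and the analogue of Corollary \ref{cr:algebraic} goes through. Your instinct that a semicontinuity/compactness argument is needed is correct, but it is deployed to \emph{rule out} the low-rank case near $(s_0,t_0)$, not to feed a Bernoulli-type approximation scheme.
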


This is a far reaching common generalization of Hochman's Corollaries \ref{cr:algebraic} and \ref{cr:Sierpinski}.
We discuss some of the main ideas in the special case of the family of IFS's
\[
\Phi_{s,t}=\Big\{x\mapsto \frac{x}{3},x\mapsto \frac{x}{3}+1, x\mapsto \frac{x}{3}+s,
x\mapsto \frac{x}{3}+t\Big\}
\]
with uniform probability weights.
This is perhaps the simplest family not contained in the results of Hochman, and as
was shown by Chen (see Section \ref{sc:failure}),
this family contains IFS's without exact overlaps that
fail the exponential separation property (in a very strong sense).

Let $\xi_1,\xi_2,\ldots$ be a sequence of independent random variables taking the
values $0,1,s,t$ with equal probabilities.
As we discussed in Section \ref{sc:Bernoulli}, the exponential separation
property can be relaxed in Hochman's results.
Instead of a lower bound on $\Delta_n$, it suffices to have bounds of the form
\begin{equation}\label{eq:ent-bound2}
H\Big(\sum_{j=0}^{n-1}\xi_j\cdot 3^{-j}; C^{-n}\Big)\ge(\log 3-\e_n) n
\end{equation}
for infinitely many values of $n$ with some constant $C$ and a sequence $\e_n\to 0$.
(See Section \ref{sc:Bernoulli} for the definition of this notation.)

Theorem \ref{th:Rapaport} is proved by verifying condition \eqref{eq:ent-bound2}.
With this aim in mind, we examine what happens when \eqref{eq:ent-bound2}
fails for some $n$, $C$ and $\e_n$.
We write $\cL^{(n)}$ for the family of (inhomogeneous)
linear forms of the form $a_1\cdot 1+a_2 Y_1+a_3 Y_2$,
where each $a_i$ is a sum of a subset of the numbers $1,3^{-1},\ldots, 3^{-n+1}$
and each term $3^j$ is allowed in at most one of the $a_i$.
This definition is designed so that the values taken by the random variable
$\sum_{j=0}^{n-1}\xi_j \cdot 3^{-j}$ are precisely the values of the linear forms in $\cL^{(n)}$
evaluated at $s$ and $t$.

We write $\cL^{(n)}-\cL^{(n)}$ for the set of linear forms that can be written as the difference of
two elements of $\cL^{(n)}$.
We also fix some parameter point $(s_0,t_0)$ such that the IFS lacks exact overlaps.
We consider pairs of elements in the support of $\sum_{j=0}^{n-1}\xi_j \cdot 3^{-j}$
that are at distance no more than $C^{-n}$.
Then for any  such pair,
there corresponds a linear form $L\in \cL^{(n)}-\cL^{(n)}$ such that $|L(s_0,t_0)|\le C^{-n}$.
We write $\cA^{(n)}$ for the collection of linear forms in $\cL^{(n)}-\cL^{(n)}$ that arise in this way.
(This definition depends on $C$, $s_0$ and $t_0$, which we suppress in our notation.)

Let $n$ be such that \eqref{eq:ent-bound2} fails (for some choice of $\e_n$ and $C$).
We distinguish two cases depending on the rank of $\cA^{(n)}$.
The first case arises when there are at least two linearly independent forms in $\cA^{(n)}$,
and the second case is when the elements of $\cA^{(n)}$ are all scalar multiples of each other.

In the first case, we take two linearly independent $L_1,L_2\in\cL^{(n)}-\cL^{(n)}$.
Provided $C$ is sufficiently large, the lines determined by $L_1$ and $L_2$ cannot be parallel.
Indeed, if that was the case, their distance would be a rational number with denominator
bounded by an exponential in $n$, which we can force to be $0$ by taking $C$ sufficiently large.
Since the lines are not parallel, we can solve the equations
\begin{align*}
L_1(s_n,t_n)=&0,\\
L_2(s_n,t_n)=&0,
\end{align*}
and find that its solution $(s_n,t_n)$ is a pair of rational numbers with denominators bounded
by an exponential in $n$.
Moreover, the distance of $(s_n,t_n)$ from $(s_0,t_0)$ will be an arbitrarily small exponential in $n$
if $C$ is chosen sufficiently large.

The points $(s_n,t_n)$ have the same repellency property as those in the proof of
Corollary \ref{cr:Sierpinski}.
We discuss next how to show
that the second case, that is when the elements of $\cA^{(n)}$ are proportional,
arises for only finitely many values of $n$.
Then the argument for Corollary \ref{cr:Sierpinski} can be carried over
to prove \eqref{eq:ent-bound2}.

We begin by extending the definition of entropy rates.
Let $\ell$ be a line in $\R^2$ (that does not necessarily contain $0$).
We denote by $Y_{\ell}^{(n)}$ the random $\ell\to \R$
function $(s,t)\mapsto\sum_{j=0}^{n-1}\xi_j(s,t)\cdot 3^{-j}$.
We define the entropy rate of the line $\ell$ by
\[
h(\ell):=\lim_{n\to\infty} \frac{H(Y_{\ell}^{(n)})}{n}.
\]
Here $H(Y_{\ell}^{(n)})$ stands for the Shannon entropy of $Y_{\ell}^{(n)}$, which is a random element taking finitely
many values.
It can be shown that $H(Y_{\ell}^{(n)})$ is subadditive, hence the limit exists and is equal to the
infimum.
The quantity $h(\ell)$ measures the amount of exact overlaps that occur simultaneously for
all parameter points $(s,t)\in\ell$.

Now suppose that the second case occurs for some $n$ in our above discussion, that is
the linear forms in $\cA^{(n)}$ are proportional.
Let $\ell$ be the line on which all elements of $\cA^{(n)}$ vanish.
It is immediate from the definition of $\cA^{(n)}$ that
\[
H(Y_{\ell}^{(n)})\le H\Big(\sum_{j=0}^n\xi_j 3^{-j};C^{-n}\Big).
\]
Supposing
\begin{equation}\label{eq:ent-bound3}
H\Big(\sum_{j=0}^n\xi_j 3^{-j};C^{-n}\Big)\le (\log 3-\e)n
\end{equation}
for some $\e>0$, we can conclude
\[
h(\ell)\le \log 3-\e.
\]

In light of all this, the next proposition -- implicit in \cite{Rap-algebraic} --
implies that the second case and \eqref{eq:ent-bound3} for
some fixed $\e>0$ may occur for only finitely many $n$'s.

\begin{prp}\label{pr:Rapaport}
Let $(s_0,t_0)$ be some parameters such that the IFS $\Phi_{s_0,t_0}$ contains no exact overlaps.
Fix some $\e>0$.
Then there is a neighbourhood of $(s_0,t_0)$ that is not intersected by any lines $\ell$ with
$h(\ell)\le\log 3-\e$.
\end{prp}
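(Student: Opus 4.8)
The plan is to reduce the statement to two facts about the lines $\ell$ with $h(\ell)\le\log 3-\e$: that there are only finitely many of them, and that none of them passes through $(s_0,t_0)$. Once both are available, any ball around $(s_0,t_0)$ of radius smaller than the minimum of the (positive) distances from $(s_0,t_0)$ to those finitely many lines is the required neighbourhood. The elementary input is that $h(\ell)<\log 4$ already constrains $\ell$ a great deal. The $4^n$ affine functions $(s,t)\mapsto\sum_{j=0}^{n-1}\xi_j(s,t)3^{-j}$ attached to the $4^n$ choices of $(\xi_0,\ldots,\xi_{n-1})$ are pairwise distinct, since their coefficient triples $(\sum_{\xi_j=1}3^{-j},\sum_{\xi_j=s}3^{-j},\sum_{\xi_j=t}3^{-j})$ recover the choice from base-$3$ digits; hence, if no two of them coincided on $\ell$ at any level, then $H(Y^{(n)}_\ell)=n\log 4$ for all $n$ and $h(\ell)=\log 4$. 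So $h(\ell)<\log 4$ forces, at some level $n$, two choices $\xi\neq\xi'$ with $\sum_j\xi_j(s,t)3^{-j}\equiv\sum_j\xi'_j(s,t)3^{-j}$ on $\ell$. The difference is a nonzero affine form with coefficients in $3^{-n+1}\Z$ vanishing on $\ell$, so $\ell$ is a rational line; and evaluating this identity at $(s_0,t_0)$, were it on $\ell$, would exhibit two distinct $n$-fold compositions of maps of $\Phi_{s_0,t_0}$ with equal translation part and contraction $3^{-n}$, i.e. an exact overlap in $\Phi_{s_0,t_0}$, contrary to hypothesis. Thus every line with $h(\ell)\le\log 3-\e$ is rational and avoids $(s_0,t_0)$; in particular this already handles the coordinate lines $\{s=0\},\{s=1\},\{t=0\},\{t=1\},\{s=t\}$, on each of which two maps literally coincide.

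The hard part — and the only place a genuinely arithmetic ingredient enters — is to show that these lines form a finite set, equivalently that their arithmetic complexity is bounded in terms of $\e$. The mechanism I would use is that $h(\ell)$ dominates the entropy rate of the IFS at any point of $\ell$: for $p\in\ell$ the value $\sum_j\xi_j(p)3^{-j}$ is a function of $Y^{(n)}_\ell$, so $h(\Phi_p)\le h(\ell)\le\log 3-\e$; and, apart from the finitely many coordinate lines just mentioned, one may choose $p$ to be a rational point of $\ell$ whose four translation parameters $0,1,s,t$ are distinct and whose height is comparable to the complexity of $\ell$. One then invokes a height bound for the entropy rate of the kind introduced by Breuillard and Varj\'u in Theorem \ref{th:BV1}, but adapted to this multi-parameter, rational-translation situation — as underlies Rapaport's treatment of algebraic contraction factors — to the effect that an entropy rate bounded away from $\log 3$ forces the height of the translations, hence the complexity of $\ell$, to be at most some $C(\e)$. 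Proving this bound (morally: lines of large complexity, and the rational IFS's they carry, have entropy rate too close to $\log 4$ to fall below $\log 3$) is the substantial step and the one I expect to absorb most of the effort; it plays here the role that the combination of Theorems \ref{th:BV1} and \ref{th:Mignotte} plays in Section \ref{sc:Bernoulli}.

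Granting the complexity bound, the lines $\ell$ with $h(\ell)\le\log 3-\e$ form a finite set, and by the first paragraph no member of this set contains $(s_0,t_0)$; each therefore has positive distance from $(s_0,t_0)$. A ball around $(s_0,t_0)$ of radius below the minimum of these finitely many distances meets no line with $h(\ell)\le\log 3-\e$, which is the assertion of the proposition.
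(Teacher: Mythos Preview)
Your reduction to two claims---(b) no line $\ell$ with $h(\ell)\le\log 3-\e$ passes through $(s_0,t_0)$, and (a) there are only finitely many such lines---is clean, and your argument for (b) is correct. The gap is in (a): the Breuillard--Varj\'u-type height bound you invoke cannot hold in the form you need. You want a statement like ``$h(\Phi_{s,t})\le\log 3-\e$ for rational $(s,t)$ with $0,1,s,t$ distinct forces the height of $(s,t)$ to be at most $C(\e)$''. But this is false precisely because of the lines you are trying to count: if $\ell$ is any line with $h(\ell)\le\log 3-\e$ (and such lines exist beyond the five coordinate lines---e.g.\ $\{s=1/3\}$ already carries level-$2$ overlaps), then \emph{every} $p\in\ell$ satisfies $h(\Phi_p)\le h(\ell)\le\log 3-\e$, including rational points of arbitrarily large height. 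So the inequality $h(\Phi_p)\le\log 3-\e$ simply does not bound the height of $p$, and hence cannot bound the complexity of $\ell$ via a point on it. Your scheme of ``pick a rational point of height comparable to the complexity of $\ell$ and apply the height bound'' therefore cannot close: the height bound is contradicted by the very lines whose finiteness you want.

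The paper takes a completely different route that avoids finiteness altogether. Arguing by contradiction, one takes a sequence of low-entropy lines $\ell_n$ approaching $(s_0,t_0)$ and passes to a limit line $\ell_\infty$ through $(s_0,t_0)$. The key idea is to encode each line $\ell$ (via its intercepts $(\sigma,\tau)$) by a self-similar IFS $\Psi_{\sigma,\tau}$ on $\R^2$ whose exact overlaps match those of $Y_\ell^{(n)}$; then $h(\ell)=h(\Psi_{\sigma,\tau})$, and low entropy rate forces $\dim\nu_{\sigma_n,\tau_n}\le 1-\e/\log 3$. Lower semicontinuity of dimension transfers this to the limit, and Hochman's theorem in $\R^2$ (where, crucially, exact overlaps for $\Psi_{\sigma,\tau}$ occur at isolated parameter points rather than along curves, so exponential separation can be verified as in Corollary~\ref{cr:algebraic}) then forces $\Psi_{\sigma_\infty,\tau_\infty}$, and hence $\Phi_{s_0,t_0}$, to contain exact overlaps. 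This soft compactness-and-semicontinuity argument is what replaces your attempted arithmetic finiteness.
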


We end this section by discussing the proof of this result.
Suppose to the contrary that the result is false, that is, there is a sequence
$\ell_1,\ell_2,\ldots$
of lines passing closer and closer to $(s_0,t_0)$ with $h(\ell_n)<\log 3-\e$.
We suppose as we may that the lines $\ell_n$ converge (in any reasonable topology)
to a line $\ell_\infty$.
We also suppose for simplicity that none of $\ell_1,\ell_2,\ldots,\ell_\infty$
is parallel to either of the $s$ or $t$ axes, and none of them goes through the origin.

We associate a self-similar measure in $\R^2$ to each line $\ell_j$.
For $j=1,2,\ldots,\infty$, let $\s_j$ and $\tau_j$ be the unique numbers such that
$\ell_j$ is spanned by $(\s_j,0)$ and $(0,\tau_j)$.
For $\s,\tau\in\R$, we define the IFS
\begin{align*}
\Psi_{\s,\tau}:=
\Big\{&
(x,y)\mapsto \Big(\frac{x}{3},\frac{y}{3}\Big),
(x,y)\mapsto \Big(\frac{x}{3}+1,\frac{y}{3}+1\Big),\\
&(x,y)\mapsto \Big(\frac{x}{3}+\s,\frac{y}{3}\Big),
(x,y)\mapsto \Big(\frac{x}{3},\frac{y}{3}+\tau\Big)
\Big\},
\end{align*}
and write $\nu_{\s,\tau}$ for the associated self-similar measure
(with equal probability weights).

It is immediate from the definitions that the same exact overlaps occur for the
random variables $Y_{\ell_j}^{(n)}$ as for the IFS $\Psi(\s_j,\tau_j)$.
It follows that
\[
h(\Psi_{\s_j,\tau_j})=h(\ell_j)\le \log 3-\e
\]
for $j<\infty$.
Using this, it can be shown that
\[
\dim\nu_{\s_j,\tau_j}\le \frac{\log 3 -\e}{\log 3}= 1-\e/\log 3.
\]

It is a general phenomenon that the dimension of self-similar measures depends lower semi-continuously on the
parameters, see e.g. \cite{Fen} for results of this type covering even self-affine measures.
Using this, it follows that
\[
\dim\nu_{\s_\infty,\tau_\infty}\le 1-\e/\log 3.
\]

The proof of Proposition \ref{pr:Rapaport} is now finished by establishing a suitable
analogue of Conjecture \ref{cn:EO-measure} for the IFS's $\Psi_{\s,\tau}$, which shows
that $\Psi_{\s_0,\t_0}$ and hence $\Phi_{s,t}$ for all $(s,t)\in\ell$ including $(s_0,t_0)$
contains exact overlaps.
This can be done along the lines of the proof of Corollary \ref{cr:algebraic} discussed
in Section \ref{sc:exp-sep} using a higher dimensional version of Hochman's theorem,
which can be found in \cite{Hoc-Rd}.
The crucial difference between the IFS's $\Phi_{s,t}$ and $\Psi_{\s,\tau}$ is
that the ambient space is $2$-dimensional for the latter and this matches
the number of parameters.
This means that exact overlaps occur at single points (as opposed to along lines),
which have the required repellency property.

\section{Homogeneous IFS's of three maps} \label{sc:3maps}

In this section, we discuss the IFS's
\[
\Phi_{\l,t}=\{(x\mapsto\l x,x\mapsto \l x+1,x\mapsto\l x+t)\}.
\]
Rapaport and Varj\'u \cite{RV-3maps} made some partial progress
towards extending the results for Bernoulli convolutions
discussed in Section \ref{sc:Bernoulli} to this setting
and to some more general IFS's (see \cite{RV-3maps}*{Section 3}).

Before we can state these results, we need to introduce some relevant
notation and terminology.
We write $\mu_{\l,t}$ for the self-similar measure associated to the IFS $\Phi_{\l,t}$
and uniform probability weights.
Let $\xi_1,\xi_2,\ldots$ be a sequence of independent random $\R\to \R$ functions
taking the values $t\mapsto0$, $t\mapsto1$ and $t\mapsto t$ with
equal probability.
Let $U\subset(0,1)\times\R$, $n\in\Z_{\ge 0}$, and write $A_{U}^{(n)}$ for the 
random $U\to \R$ function
\[
(\l,t)\mapsto \sum_{j=1}^n \xi_j(t)\l^j.
\]
We define the entropy rate
\[
h(U):=\lim_{n\to \infty}\frac{H(A_{U}^{(n)})}{n}= \inf\frac{H(A_{U}^{(n)})}{n}.
\]
We abbreviate $A_{\{\l,t\}}^{(n)}$ as $A_{\l,t}^{(n)}$,
and $h(\{\l,t\})$ as $h(\l,t)$.
One should think about $h(\l,t)$ as a quantity expressing the amount of exact overlaps
contained in the the IFS $\Phi_{\l,t}$ and $h(U)$ aims to quantify the amount of
exact overlaps occurring simultaneously for the parameter points in $U$.

We write $\cR$ for the set of meromorphic functions on the unit disc
that can be written as ratios of two power series with coefficients $-1,0,1$. 
We denote by $\Gamma$ the set of curves $\gamma\subset(0,1)\times \R$ that are
either of the following two forms
\begin{itemize}
\item $\gamma=\{(\l,t)\in(0,1)\times \R: t=R(\l)\}$  for some $R\in\cR$,
\item $\gamma=\{(\l_0,t):t\in\R\}$ for some fixed $\l_0\in(0,1)$.
\end{itemize}
It can be shown that exact overlaps occur in the family of IFS's $\Phi_{\l,t}$
along finite unions of curves in $\Gamma$, but not all elements of $\Gamma$ arises in this way.

The next result is an analogue of Theorem \ref{th:BV2} in the setting of the IFS
$\Phi_{\l,t}$.

\begin{thm}[Rapaport, Varj\'u]\label{th:RV1}
Suppose that Conjecture \ref{cn:EO-measure} does not hold for the IFS $\Phi_{\l,t}$
for some choice of parameters $\l$ and $t$.
Then for every
$\e>0$ and $N\ge 1$, there exist $n\ge N$ and $(\eta,s)\in(0,1)\times \R$
such that
\begin{enumerate}
\item $|\l-\eta|,|t-s|\le\exp(-n^{\e^{-1}})$,
\item $\frac{1}{n\log \eta^{-1}}H(A_{\eta,s}^{(n)})\le\dim\mu_{\l,t}+\e$,
\item $h(\gamma)\ge\min\{\log 3,\log \l^{-1}\}-\e$ for all
$\gamma\in\Gamma$ with $(\eta, s)\in\gamma$.
\end{enumerate}
\end{thm}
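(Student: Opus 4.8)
The plan is to run the blueprint of the proof of Theorem \ref{th:BV2} sketched in the ``digression'' of Section \ref{sc:Bernoulli}, with $\{-1,0,1\}$-coefficient polynomials and their roots replaced by \emph{pairs} of such polynomials and the associated curves in $\Gamma$, and to import from Rapaport's argument in Section \ref{sc:Rapaport} the use of entropy rates of curves together with the lower semicontinuity of the dimension of self-similar measures. Since Conjecture \ref{cn:EO-measure} concerns IFS's without exact overlaps, we may assume $\Phi_{\l,t}$ has none; its failure then means $d_0:=\dim\mu_{\l,t}<\min(1,\log3/\log\l^{-1})$. Fix $\e>0$ small and $N\ge1$, put $\b:=(d_0+\e/3)\log\l^{-1}<\min\{\log3,\log\l^{-1}\}$, and let implied constants depend on $\l,t,\e$. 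By the relaxed form of Hochman's theorem on $\R$ (the analogue of \cite{Hoc-R}*{Theorems 1.3 and 1.4} already used in Sections \ref{sc:Bernoulli} and \ref{sc:Rapaport}), the inequality $d_0<\b/\log\l^{-1}$ forces, for every constant $C$, the bound $H(A_{\l,t}^{(m)};C^{-m})\ge\b m$ to fail for all large $m$. So for every large $C$ there are infinitely many $m$ with a ``moderate entropy drop'' $H(A_{\l,t}^{(m)};C^{-m})<\b m$.

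Fix such an $m$ with $C$ large. Every pair of points in the support of $A_{\l,t}^{(m)}$ at distance $\le C^{-m}$ is the value at $(\l,t)$ of a linear form $L(x,y)=\bar P(x)+y\,\bar Q(x)$ with $\bar P,\bar Q$ of degree $\le m$, coefficients in $\{-1,0,1\}$ and $(\bar P,\bar Q)\ne(0,0)$; its zero set $\gamma_L:=\{(x,y):\bar P(x)+y\,\bar Q(x)=0\}$ is a finite union of curves in $\Gamma$ (a curve $y=R(x)$ with $R=-\bar P/\bar Q\in\cR$ when $\bar Q\ne0$, and vertical lines through the roots of $\bar P$ otherwise). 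As $L$ has complexity controlled by $m$, the estimate $|L(\l,t)|\le C^{-m}$ forces $(\l,t)$ to lie within $C^{-\a m}$ of $\gamma_L$ for some $\a>0$ --- the two-parameter version of the observation used in the proof of Lemma \ref{lm:separation} (cf.\ \cite{Var-ECM}*{Lemma 5.2}). The crux of the argument, and the step I expect to be the main obstacle, is then to show that once $C$ and $m$ are large enough \emph{all} the curves $\gamma_L$ arising this way pass through a common point $(\eta,s)=(\eta_m,s_m)$ within $C^{-\a m}$ of $(\l,t)$ --- or else share a common component --- so that each near-collision recorded above becomes an \emph{exact} overlap of $A_{\eta,s}^{(m)}$. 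In the one-parameter case this coincidence is precisely Mahler's separation bound for $\cE^{(m)}$; here one needs its two-parameter counterpart: a quantitative dichotomy asserting that two distinct curves in $\Gamma$ whose defining data has complexity $O(m)$ either coincide or separate at rate $\exp(-O(m\log m))$ away from their (few, controlled) intersection points. Establishing it should combine the structural fact from Section \ref{sc:3maps} that the exact-overlap locus of $\Phi_{\l,t}$ is a finite union of curves in $\Gamma$, resultant and height estimates for intersections of curves $y=R_1(x)$, $y=R_2(x)$ with $R_i\in\cR$, and a Mignotte-type lower bound in the spirit of Theorem \ref{th:Mignotte}. This is exactly the difficulty highlighted in Section \ref{sc:failure}: exact overlaps occur along curves that may intersect, which has no counterpart in the Bernoulli setting.

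Granting the coincidence, one has $H(A_{\eta,s}^{(m)})\le H(A_{\l,t}^{(m)};C^{-m})<\b m$, hence the entropy rate satisfies $h(\eta,s)\le H(A_{\eta,s}^{(m)})/m<\b=(d_0+\e/3)\log\l^{-1}$, and since $\eta$ can be taken as close to $\l$ as we like, $h(\eta,s)<(d_0+\e/2)\log\eta^{-1}$; this is property~(2) at any level $n$ at which $\frac1n H(A_{\eta,s}^{(n)})$ has descended near its limit $h(\eta,s)$. As $\eta,s$ are algebraic of controlled height, this descent takes place at a level comparable to $m$, its rate being governed by the controlled complexity of the overlap relations at $(\eta,s)$; this is what ties the admissible level $n$ to $m$. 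For property~(3) I would argue as in the proof of Proposition \ref{pr:Rapaport}: were some $\gamma\in\Gamma$ with $h(\gamma)<\min\{\log3,\log\l^{-1}\}-\e$ to pass through $(\eta,s)$, then --- $(\l,t)$ being within $C^{-\a m}$ of $(\eta,s)\in\gamma$ --- the $\gg m$ simultaneous exact overlaps along $\gamma$ would, via the lower semicontinuity of the dimension of self-similar measures (\cite{Fen}; cf.\ the proof of Proposition \ref{pr:Rapaport}, using an auxiliary higher-dimensional IFS in the spirit of the $\Psi_{\s,\tau}$ there), force $\Phi_{\l,t}$ itself to contain exact overlaps --- a contradiction. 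The delicate point here, and a reason this is only partial progress, is that the auxiliary IFS now carries the transcendental contraction $\l$ rather than an algebraic one, so the higher-dimensional dimension input must be supplied by other means, or property~(3) accepted in a correspondingly weakened form.

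Finally one must exhibit a single level $n\ge N$ carrying all three properties. Since the approximants produced above from scale-$C^{-m}$ drops are only $C^{-\a m}$-close, property~(1) at level $n$ holds for $n$ up to roughly $(\a m\log C)^\e$; when this is large enough the descent of the previous paragraph delivers~(2) there and~(3) holds at every level, so if no triple arose this would mean the approximants $(\eta_m,s_m)$ are all ``too far'' in the sense of~(1). Exactly as in the proof of Theorem \ref{th:BV2}, one then passes to finer scales and boosts the level: working with the refined, windowed form of Hochman's criterion from \cite{BV-transcendent} --- which tolerates the weaker, $\exp(-O(m\log m))$-type separation provided it holds on windows of consecutive levels, a state of affairs guaranteed by $(\l,t)$ being ``badly approximable'' --- and with the two-parameter separation of the second paragraph replacing Mahler's bound, one obtains $H(A_{\l,t}^{(\wt n)};C^{-\wt n})\ge(\min\{\log3,\log\l^{-1}\}-\e_{\wt n})\wt n$ for infinitely many $\wt n$ with $\e_{\wt n}\to0$, whence $\dim\mu_{\l,t}=\min(1,\log3/\log\l^{-1})$ by the criterion of the first paragraph, contradicting the failure of Conjecture \ref{cn:EO-measure}. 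The bookkeeping of scales and levels here is delicate, but the genuinely new difficulty is the curve-coincidence step of the second paragraph.
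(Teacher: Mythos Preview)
Your overall scaffolding matches the paper: one runs the blueprint of Theorem~\ref{th:BV2}, with roots of $\{-1,0,1\}$-polynomials replaced by curves in $\Gamma$, and the key new ingredient is an analogue of Proposition~\ref{pr:Rapaport} --- namely Proposition~\ref{pr:RV}, which says that for $(\l,t)$ without exact overlaps, no curve $\gamma\in\Gamma$ with $h(\gamma)$ bounded away from $\min(\log\l^{-1},\log 3)$ enters a neighbourhood of $(\l,t)$. Once this is in hand, property~(3) follows simply because $(\eta,s)$ is close to $(\l,t)$, and properties~(1) and~(2) come from the entropy bookkeeping you describe.

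The genuine gap is at the step you yourself flag. Your plan for the Proposition~\ref{pr:RV} analogue is to attach to each curve a higher-dimensional Euclidean self-similar measure in the spirit of the $\Psi_{\s,\tau}$ of Section~\ref{sc:Rapaport} and invoke \cite{Hoc-Rd}; you correctly observe that this breaks down because the auxiliary IFS inherits the transcendental contraction $\l$, so exponential separation cannot be verified by the algebraic-parameter argument of Corollary~\ref{cr:algebraic}. The paper's resolution is different in kind and is the main innovation you are missing: the fractal objects attached to curves are not self-similar measures on $\R^d$ but analogues of self-similar measures over a \emph{function field} (a formal power series setting in which $\l$ is replaced by an indeterminate). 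A notion of dimension is introduced for these objects, Hochman's theorem is generalized to that setting, and the analogue of exponential separation is verified there by an argument parallel to the one for Corollary~\ref{cr:Sierpinski}. The transcendence of $\l$ ceases to be an obstacle because $\l$ has been replaced by a formal variable. Without this idea your plan does not close, as you essentially acknowledge.

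A remark on emphasis: you single out the two-parameter ``curve-coincidence'' separation as the crux, whereas the paper identifies Proposition~\ref{pr:RV} as the key new ingredient. The separation issues for curves in $\Gamma$ are real --- these curves are not lines and can develop singularities, which does complicate the limiting arguments --- but the conceptual leap is the passage to function fields, not a Mignotte-type bound for pairs of curves.
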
 

Item (2) in the conclusion means that the IFS $\Phi_{\eta,s}$ contains enough overlaps after
$n$ iteration to force the dimension of $\mu_{\eta,s}$ below $\dim\mu_{\l,t}+\e$.
Item (3) in the conclusion implies that not all of these exact overlaps occur along
the same curve $\gamma$.
From these properties it can be deduced in particular that $\eta$ and $s$ are algebraic
numbers and roots of polynomials of low degree with small integer coefficients.
(For a precise statement, see \cite{RV-3maps}*{Theorem 1.3}.)
This yields a bound on the number of possible points that can arise as $(\eta,s)$
in the conclusion and together with Item (1), this shows that the
Hausdorff dimension of the set of exceptional parameters for which
Conjecture \ref{cn:EO-measure} fails is $0$.
This improves Hochman's bound, which is $1$, albeit that bound is given for the
stronger notion of packing dimension, which may exceed the Hausdorff dimension.

It is still an open problem whether an analogue of Theorem \ref{th:BV1} holds for the IFS
$\Phi_{\l,t}$.
One possible formulation is the following.

\begin{que}\label{q:RV}
Is it true that for all $\e>0$, there is $M$ such that the following holds?
Let $(\l,t)\in(\e,1-\e)\times \R$ be such that $h(\l,t)\le \min(\log 3,\log \l^{-1})-\e$
and $h(\gamma)\ge\min(\log 3,\log \l^{-1})-M^{-1}$ for all $\g\in\Gamma$ with $(\l,t)\in\gamma$.
Then $M(\l)\le M$.
\end{que}

We note that a condition about the entropy rate of curves passing through $(\l,t)$ is
necessary.
Indeed, we have, for example $h(\gamma)=\log 3-(2/3)\log 2$ for the
curve $\gamma=\{(\l,1):\l\in(0,1)\}$, and hence $h(\l,1)\le\log 3-(2/3)\log 2 $
for all $\l\in(0,1)$.

We also have the following conditional result towards Conjecture \ref{cn:EO-measure}.

\begin{thm}[Rapaport, Varj\'u]\label{th:RV2}
Suppose that the answer to Question \ref{q:RV} is affirmative.
Then Conjecture \ref{cn:EO-measure} holds for the IFS $\Phi_{\l,t}$
with equal probability weights
for all $\l\in(0,1)$ and $t\in\R$.
\end{thm}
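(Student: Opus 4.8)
The plan is to argue by contradiction, following the template of the proof of Theorem \ref{th:Bernoulli}, with Theorem \ref{th:RV1} playing the role of Theorem \ref{th:BV2} and the assumed affirmative answer to Question \ref{q:RV} playing the role of Theorem \ref{th:BV1}. For $\l$ algebraic the conclusion is already contained in Theorem \ref{th:Rapaport}, so assume $\l$ is transcendental and suppose that $(\l,t)\in(0,1)\times\R$ violates Conjecture \ref{cn:EO-measure}. Then $\dim\mu_{\l,t}<\min(1,\log 3/\log\l^{-1})$; in particular $\Phi_{\l,t}$ has no exact overlaps, and the number $c:=\min(\log 3,\log\l^{-1})-\dim\mu_{\l,t}\cdot\log\l^{-1}$ is strictly positive. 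Put $\e':=c/2$ and let $M=M(\e')$ be the constant furnished by the affirmative answer to Question \ref{q:RV}. Applying Theorem \ref{th:RV1} with $\e$ chosen small in terms of $\e'$, $M$, $\l$, $t$ and with $N$ arbitrary, we obtain, for infinitely many $n$, points $(\eta_n,s_n)$ satisfying items (1)--(3).

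The first step is to extract arithmetic control on $\eta_n$. By subadditivity of $H(A_{\eta_n,s_n}^{(n)})$ and item (2),
\[
h(\eta_n,s_n)\le\frac{1}{n}H(A_{\eta_n,s_n}^{(n)})\le(\dim\mu_{\l,t}+\e)\log\eta_n^{-1},
\]
and since $\eta_n\to\l$ by item (1), the right-hand side is at most $\min(\log 3,\log\eta_n^{-1})-\e'$ provided $\e$ is small and $n$ large. In the same way, item (3) together with $\eta_n\to\l$ gives $h(\gamma)\ge\min(\log 3,\log\eta_n^{-1})-M^{-1}$ for every $\gamma\in\Gamma$ with $(\eta_n,s_n)\in\gamma$, once $\e<(2M)^{-1}$ and $n$ is large. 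These are precisely the hypotheses of Question \ref{q:RV} at the point $(\eta_n,s_n)$, so its affirmative answer yields the \emph{uniform} bound $M(\eta_n)\le M$; and the finer structural output of Theorem \ref{th:RV1} (see \cite{RV-3maps}*{Theorem 1.3}) shows that $\eta_n,s_n$ are algebraic, with the degree of $\eta_n$ and the complexity of $s_n$ controlled in terms of $n$ alone.

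The second step is the Diophantine separation, carried out in the spirit of the proof of Theorem \ref{th:Bernoulli}. By item (1) the non-overlapping parameter $(\l,t)$ is approximated to within $\exp(-n^{\e^{-1}})$ by the bounded-complexity algebraic point $(\eta_n,s_n)$, which by items (2)--(3) is an isolated, repellent intersection point of exact-overlap curves---the two-variable analogue of the rational points in the proof of Corollary \ref{cr:Sierpinski}. One now invokes a two-variable counterpart of the observation underlying Lemma \ref{lm:separation}: if the entropy of $\sum_{j=1}^{\wt n}\xi_j(t)\l^{j}$ at scale $C^{-\wt n}$ falls substantially below its maximal value, then $(\l,t)$ must lie close to a bounded-complexity algebraic intersection point of level-$\wt n$ exact-overlap curves. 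Calibrating $\wt n$ to the quantity $\dist((\l,t),(\eta_n,s_n))>0$ exactly as in the proof of Lemma \ref{lm:separation}---so that $(\eta_n,s_n)$ sits just beyond the relevant threshold while a separation estimate for bounded-complexity algebraic points (the two-variable substitute for Mignotte's Theorem \ref{th:Mignotte}; cf.\ \cite{RV-3maps}*{Lemmata 2.3 and 4.6}) rules out any other such point within the threshold---one concludes that for infinitely many $\wt n$ (which grow with $n$) the variable $\sum_{j=1}^{\wt n}\xi_j(t)\l^{j}$ has nearly maximal entropy at scale $C^{-\wt n}$, which is the relaxed separation condition (cf.\ \eqref{eq:ent-bound2} and the discussion around it) under which Hochman's theorem applies. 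Feeding this into the relaxed form of Theorem \ref{th:exp-separation} (see \cite{Hoc-R}*{Theorems 1.3 and 1.4}) gives $\dim\mu_{\l,t}=\min(1,\log 3/\log\l^{-1})$, contradicting the choice of $(\l,t)$.

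The main obstacle is the second step, which is genuinely two-variable. In the one-parameter Bernoulli setting the exact-overlap locus is a discrete set of algebraic numbers that Mignotte's theorem separates directly, and the argument can be run using $\Delta_n$ alone; here the overlaps occur along the curves of $\Gamma$, which may intersect, so one cannot use $\Delta_{\wt n}(\Phi_{\l,t})$---proximity to a single overlap curve already makes it small---and must instead use the entropy deficit of $\sum_{j=1}^{\wt n}\xi_j(t)\l^{j}$, which is substantial only when $(\l,t)$ is close to many overlap curves simultaneously, hence, through the transversality recorded in item (3) of Theorem \ref{th:RV1}, close to a bounded-complexity intersection point. Making this ``many overlap curves force a nearby point of bounded complexity'' step quantitative, and separating such points by an exponential-in-$\wt n$ bound, is the technical heart of the matter; a secondary point is that Question \ref{q:RV} bounds only the Mahler measure of the contraction factor $\eta_n$, so the complexity control for the translation coordinate $s_n$ must be drawn from the structural part of Theorem \ref{th:RV1}. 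By comparison, the bookkeeping matching the output to a condition of the type \eqref{eq:ent-bound2} across infinitely many scales, and the final appeal to Hochman's machinery, are routine.
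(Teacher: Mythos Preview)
Your proposal follows the same overall strategy as the paper's sketch: contradiction, Theorem~\ref{th:RV1} in place of Theorem~\ref{th:BV2}, Question~\ref{q:RV} in place of Theorem~\ref{th:BV1}, then a two-variable Mignotte-type separation feeding into Hochman's relaxed entropy criterion. That is the right shape.

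There is one ingredient the paper names explicitly that your outline misattributes. In your second step you claim that a substantial entropy deficit at scale $C^{-\wt n}$ forces $(\l,t)$ near a bounded-complexity intersection point, and you justify this ``through the transversality recorded in item (3) of Theorem~\ref{th:RV1}''. But item (3) is a statement about curves through the approximant $(\eta_n,s_n)$, not about curves near the fixed point $(\l,t)$; it does not by itself rule out that, at the intermediate scale $\wt n$, the entropy drop is caused by $(\l,t)$ lying close to a single curve in $\Gamma$ (rather than to an intersection point). What is actually needed at this juncture is a strengthening of Proposition~\ref{pr:RV} applied at $(\l,t)$ itself, with a modified notion of entropy rate tailored to the finite-level entropy bounds one is tracking. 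The paper flags exactly this: ``The argument also requires a stronger form of Proposition~\ref{pr:RV} with a modified entropy rate'', referring to \cite{RV-3maps}*{Proposition 2.4}. Your final paragraph correctly identifies that making ``many overlap curves force a nearby point of bounded complexity'' quantitative is the technical heart; the point is that the tool which does this is the strengthened Proposition~\ref{pr:RV}, not item (3) of Theorem~\ref{th:RV1}. Apart from this mislocation of the key input, your sketch matches the paper's.
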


Using ideas from \cite{BV-entropy}, one can answer
Question \ref{q:RV} affirmatively if we restrict $\l$ to be near $1$.
This allows for the following unconditional partial resolution of Conjecture \ref{cn:EO-measure}.

\begin{thm}[Rapaport, Varj\'u]\label{th:RV3}
Conjecture \ref{cn:EO-measure} holds for the IFS $\Phi_{\l,t}$ with equal probability weights
for all $(\l,t)\in(2^{-2/3},1)\times\R$.
\end{thm}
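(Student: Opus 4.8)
The plan is to argue by contradiction, combining Theorem~\ref{th:RV1} with an affirmative answer to Question~\ref{q:RV} restricted to the range $\l\in(2^{-2/3},1)$, and then to follow the strategy behind Theorem~\ref{th:RV2}, which itself parallels the Bernoulli convolution argument of Section~\ref{sc:Bernoulli}. Suppose $(\l,t)\in(2^{-2/3},1)\times\R$ is a counterexample to Conjecture~\ref{cn:EO-measure}; then $\Phi_{\l,t}$ has no exact overlaps and, since the predicted value is always an upper bound, $\dim\mu_{\l,t}\le\min(1,\frac{\log 3}{\log\l^{-1}})-\d$ for some fixed $\d>0$. As $\l>1/3$, the right-hand side equals $1-\d$. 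Applying Theorem~\ref{th:RV1} with a small parameter $\e>0$ (to be fixed below) and with $N$ arbitrarily large produces $n\ge N$ and an approximant $(\eta,s)\in(0,1)\times\R$ satisfying items (1)--(3); for $N$ large, item~(1) places $\eta$ in $(2^{-2/3},1)$ as well.

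First I would check that $(\eta,s)$ satisfies the hypotheses of Question~\ref{q:RV}. Since $h(\eta,s)=\inf_k H(A^{(k)}_{\eta,s})/k\le H(A^{(n)}_{\eta,s})/n$, item~(2) together with $\dim\mu_{\l,t}\le1-\d$ and $\min(\log 3,\log\eta^{-1})=\log\eta^{-1}$ gives
\[
h(\eta,s)\le\min(\log 3,\log\eta^{-1})-\d'
\]
for some $\d'>0$ depending only on $\d$, $\e$ and $\l$, and bounded below once $\e$ is small, because $\eta$ lies within $\exp(-N^{1/\e})$ of the fixed number $\l$. Item~(3) supplies the lower bound $h(\gamma)\ge\min(\log 3,\log\eta^{-1})-M(\d')^{-1}$ for all $\gamma\in\Gamma$ through $(\eta,s)$ as soon as $\e$ is taken below $M(\d')^{-1}$, where $M(\cdot)$ is the constant furnished by the restricted Question~\ref{q:RV} (fix $\d$, then $\e$, which pins $\d'$ down up to bounded distortion and hence fixes $M$, then shrink $\e$ once more if necessary). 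Granting the restricted positive answer to Question~\ref{q:RV}, I conclude $M(\eta)\le M$, and, as recorded in the remark following Theorem~\ref{th:RV1}, that $\eta$ and $s$ are algebraic numbers which are roots of polynomials of bounded degree with bounded integer coefficients (see \cite{RV-3maps}*{Theorem 1.3} for the precise statement).

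Next I would convert the boundedness of the complexity of $(\eta,s)$ into a separation statement for $\Phi_{\l,t}$, letting a two-variable analogue of Mignotte's bound (Theorem~\ref{th:Mignotte}) play the role Theorem~\ref{th:Mignotte} plays in the proof of Lemma~\ref{lm:separation}. Near $(\eta,s)$ the exact-overlap locus is a union of finitely many curves from $\Gamma$ through $(\eta,s)$, and item~(3) guarantees that none of them is degenerate; this is the analogue of the non-degenerate (rank two) case of Section~\ref{sc:Rapaport} and is what makes the repellency mechanism available. Bounded complexity of $(\eta,s)$ forces the overlap loci other than those through $(\eta,s)$ to be separated from $(\eta,s)$ at each level $m$ by at least $C^{-m}$ with $C$ independent of $n$, so choosing $\wt n$ as a suitable function of $|\l-\eta|+|t-s|$ as in that proof, one finds that the collisions of $\sum_{j<\wt n}\xi_j\l^j$ at a scale $C^{-\wt n}$ are accounted for by the high-entropy-rate overlap curves through $(\eta,s)$; hence the entropy deficit of $\sum_{j<\wt n}\xi_j\l^j$ at that scale is $o(\wt n)$, i.e.\ $\Phi_{\l,t}$ obeys the relaxed, entropy form of exponential separation at level $\wt n$ (in the spirit of \eqref{eq:ent-bound2}, with $3^{-j}$ replaced by $\l^j$ and the target $\log 3$ by $\log\l^{-1}$). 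Since Theorem~\ref{th:RV1} supplies infinitely many approximants, this holds at infinitely many scales, so Theorem~\ref{th:exp-separation} in its relaxed form (\cite{Hoc-R}*{Theorems 1.3 and 1.4}) gives $\dim\mu_{\l,t}=\min(1,\frac{\log 3}{\log\l^{-1}})$, the desired contradiction.

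The heart of the matter, and the step I expect to be the main obstacle, is the restricted Question~\ref{q:RV}: for each $\e>0$ there should exist $M$ such that $h(\l,t)\le\min(\log 3,\log\l^{-1})-\e$ together with $h(\gamma)\ge\min(\log 3,\log\l^{-1})-M^{-1}$ for all $\gamma\in\Gamma$ through $(\l,t)$ forces $M(\l)\le M$, provided $\l\in(2^{-2/3},1)$. The approach is to adapt the entropy-versus-Mahler-measure machinery of Breuillard--Varj\'u \cite{BV-entropy}: the curve hypothesis excludes the degenerate directions -- such as $\{(\l,1):\l\in(0,1)\}$, for which $h=\log 3-(2/3)\log 2$ independently of $\l$ -- along which entropy is lost for reasons that do not constrain $\l$, leaving a genuinely two-dimensional entropy deficit which, by the arguments of \cite{BV-entropy}, is incompatible with $M(\l)$ being large. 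The restriction $\l>2^{-2/3}$, that is $\log\l^{-1}<(2/3)\log 2$, is precisely the regime in which the entropy bounds obtainable through \cite{BV-entropy} are strong enough to drive the argument: it makes the exceptional curve above automatically satisfy the curve hypothesis, and, more substantively, it is the range in which the length scales arising in the arguments of \cite{BV-entropy} line up. Carrying this out in the two-parameter setting, with careful bookkeeping of the interplay between the entropy of a point and the entropies of the curves through it, is the technical crux.
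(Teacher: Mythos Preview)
Your proposal is correct and follows essentially the same route the paper sketches: verify the restricted instance of Question~\ref{q:RV} on $(2^{-2/3},1)$ using the entropy--Mahler-measure ideas of \cite{BV-entropy}, and then run the machinery behind Theorem~\ref{th:RV2} (Theorem~\ref{th:RV1}, a Mignotte-type separation playing the role of Theorem~\ref{th:Mignotte}, and Hochman's theorem in its entropy form). The one ingredient the paper flags that you pass over silently is a strengthened version of Proposition~\ref{pr:RV} with a modified entropy rate (\cite{RV-3maps}*{Proposition 2.4}), which is what actually justifies your claim that the collisions of $\sum_{j<\wt n}\xi_j\l^j$ at scale $C^{-\wt n}$ are accounted for by the high-entropy-rate curves through $(\eta,s)$.
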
 

The key new ingredient in the proof of Theorem \ref{th:RV1} compared
to that of Theorem \ref{th:BV2}
is the following result, whose role is similar to that of
Proposition \ref{pr:Rapaport} in the proof of Theorem \ref{th:Rapaport}.

\begin{prp}\label{pr:RV}
Let $(\l,t)\in(0,1)\times \R$ be such that the IFS $\Phi_{\l,t}$ contains no exact overlaps.
Then for all $h<\min(\log\l^{-1},\log 3)$, there is a neighbourhood of $(\l,t)$ that
is not intersected by a curve $\gamma\in\Gamma$ with $h(\g)\le h$.
\end{prp}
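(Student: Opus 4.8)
The plan is to argue by contradiction, following the template of Proposition~\ref{pr:Rapaport}. Assume the assertion fails: there is some $h<\min(\log\l^{-1},\log 3)$ and a sequence of curves $\gamma_n\in\Gamma$ with $h(\gamma_n)\le h$ and $\dist\big((\l,t),\gamma_n\big)\to 0$. The first step is to pass to a limiting curve $\gamma_\infty\in\Gamma$ through $(\l,t)$. If infinitely many $\gamma_n$ are vertical lines $\{\l_n\}\times\R$, then $\l_n\to\l$ and we may take $\gamma_\infty=\{\l\}\times\R$. Otherwise we may assume the $\gamma_n$ are graphs of functions $R_n=P_n/Q_n$ with $P_n,Q_n$ power series with coefficients in $\{-1,0,1\}$; such power series are holomorphic and locally bounded on the unit disc, so they form a normal family, and after passing to a subsequence (and a diagonal argument over the Taylor coefficients, each of which lives in the finite set $\{-1,0,1\}$) we get $P_n\to P_\infty$, $Q_n\to Q_\infty$ locally uniformly, with $P_\infty,Q_\infty$ again of the required type. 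After removing a common factor one arranges $Q_\infty\not\equiv 0$, so $R_\infty:=P_\infty/Q_\infty\in\cR$ and $\gamma_\infty=\{t=R_\infty(\l)\}\in\Gamma$ passes through $(\l,t)$.

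The crux is to transfer the entropy bound to the limit, i.e.\ to show $h(\gamma_\infty)\le h$; this cannot be done by naive semicontinuity of $\gamma\mapsto h(\gamma)$, since the infimum defining $h(\gamma)$ may be attained only in the limit and $h$ in fact drops on the dense set of exceptional curves. As in Proposition~\ref{pr:Rapaport} I would instead attach to each curve a self-similar measure whose dimension is controlled by the entropy rate. For a vertical line $\{\l_0\}\times\R$ the realizations of $A^{(n)}_{\{\l_0\}\times\R}$ are the affine functions $t\mapsto A(\xi)+B(\xi)t$ with $A(\xi)=\sum_j a_j\l_0^{\,j}$, $B(\xi)=\sum_j b_j\l_0^{\,j}$, $(a_j,b_j)\in\{(0,0),(1,0),(0,1)\}$, and these are exactly the level-$n$ cylinder values of the planar IFS
\[
\Theta_{\l_0}:=\big\{(x,y)\mapsto(\l_0 x,\l_0 y),\ (x,y)\mapsto(\l_0 x+1,\l_0 y),\ (x,y)\mapsto(\l_0 x,\l_0 y+1)\big\}
\]
with uniform weights, so $h(\Theta_{\l_0})=h(\{\l_0\}\times\R)$. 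Now if $h(\{\l_0\}\times\R)<\log 3$ then $\l_0$ satisfies a nontrivial $\{-1,0,1\}$-coefficient relation, i.e.\ $\l_0\in\cE$ is algebraic; hence Hochman's higher dimensional theorem (\cite{Hoc-Rd}, cf.\ Theorem~\ref{th:Hoc-algebraic}) gives $\dim\nu^{\Theta}_{\l_0}=\min(2,\,h(\Theta_{\l_0})/\log\l_0^{-1})\le h/\log\l_0^{-1}$, which for $\l_0$ near $\l$ is bounded away from the critical value $\min(2,\log 3/\log\l^{-1})\ge 1$ by a fixed amount (here both $h<\log 3$ and $h<\log\l^{-1}$ enter). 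Lower semicontinuity of dimension in the parameters (see~\cite{Fen}), combined with a reduction to parameters $\l_n$ of bounded complexity along the lines of \cite{BV-entropy} (needed when the degrees of the relations grow), then forces $\dim\nu^{\Theta}_{\l}<\min(2,\log 3/\log\l^{-1})$; since $\Phi_{\l,t}$ has no exact overlaps neither does $\Theta_\l$, and Hochman's theorem applied to $\Theta_\l$ contradicts this unless $h(\{\l\}\times\R)<\log 3$, which is what we want. A parallel but more delicate construction must treat graph curves $\{t=R_\infty(\l)\}$, where the fibre maps $\l'\mapsto\sum_j\xi_j(R_\infty(\l'))\l'^{\,j}$ are no longer affine; building the auxiliary self-similar object encoding the relation $t=R_\infty(\l)$ is the genuinely new ingredient.

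Granting $h(\gamma_\infty)\le h<\log 3$, I would conclude as follows. Since $h(\gamma_\infty)=\inf_n H(A^{(n)}_{\gamma_\infty})/n<\log 3$, some level $n$ has $H(A^{(n)}_{\gamma_\infty})<n\log 3$, so two distinct realizations $\xi\neq\xi'\in\{0,1,t\}^n$ satisfy $A^{(n)}_{\gamma_\infty}(\xi)\equiv A^{(n)}_{\gamma_\infty}(\xi')$ along $\gamma_\infty$. Writing $\xi_j=a_j+b_j t$, $\delta a_j=a_j-a_j'$, $\delta b_j=b_j-b_j'$, and $P_a=\sum_j\delta a_j X^j$, $P_b=\sum_j\delta b_j X^j$ (polynomials with coefficients in $\{-1,0,1\}$, not both zero), this says $P_a+R_\infty P_b\equiv 0$ on $\gamma_\infty$ in the graph case and $P_a\equiv P_b\equiv 0$ in the vertical case; in the graph case $P_b\not\equiv 0$ (else $P_a\equiv 0$), so $R_\infty=-P_a/P_b$ is rational and we may replace $Q_\infty$ by $P_b$. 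Evaluating the relation at $(\l,t)\in\gamma_\infty$ gives $\sum_j(\delta a_j+t\,\delta b_j)\l^{\,j}=0$ with not all coefficients zero (if $\l$ is a zero of $P_b$ then $P_a(\l)=0$ too, and whichever of $P_a,P_b$ is a nonzero polynomial supplies the relation). Since each coefficient $\delta a_j+t\,\delta b_j$ lies in $\{0,\pm1,\pm t,\pm(1-t)\}$, exactly the set of differences of translation parameters of maps of $\Phi_{\l,t}$, this is a nontrivial exact overlap relation for $\Phi_{\l,t}$, contradicting the hypothesis. The thresholds are now transparent: $\log 3$ is the entropy rate of a collision-free IFS (so $h<\log 3$ powers this final step), while $\log\l^{-1}$ is where the dimension of a self-similar measure with contraction $\l$ saturates at the ambient maximum (so $h<\log\l^{-1}$ powers the dimension step).

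I expect the main obstacle to be the transfer step for \emph{graph} curves. In the setting of Proposition~\ref{pr:Rapaport} the curves are affine and the fibres over them are affine functions of the parameter, so one literally obtains a planar self-similar measure with the fixed algebraic contraction $1/3$; here the contraction $\l$ varies along the curve and is in general transcendental, and the fibre maps are not affine. Constructing the correct auxiliary self-similar object, proving a dimension formula for it (or otherwise forcing its dimension below the critical value and then extracting exact overlaps), and making the limiting/semicontinuity argument rigorous in this generality — together with the reduction to parameters of bounded complexity that compensates for the loss of algebraicity of the contraction — is where the real work lies, and is presumably why \cite{RV-3maps} singles out Proposition~\ref{pr:RV} as the key new ingredient.
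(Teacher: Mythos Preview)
Your overall architecture matches the paper's: argue by contradiction, pass to a limiting curve, attach to each curve a fractal object whose ``dimension'' is controlled by the curve's entropy rate, use lower semicontinuity of that dimension along the sequence, and then apply a Hochman-type theorem to the limit object to extract an exact overlap at $(\l,t)$. Your final extraction step (from $h(\gamma_\infty)<\log 3$ to a nontrivial relation $\sum_j(\delta a_j+t\,\delta b_j)\l^j=0$) is also correct.

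Where you diverge from the paper is precisely at the point you flag yourself, and the gap is real. You try to use \emph{Euclidean} self-similar measures (the planar IFS $\Theta_{\l_0}$) as the auxiliary objects. This runs into the problem that the limit parameter $\l$ need not be algebraic, so you cannot invoke Hochman's theorem on $\Theta_\l$ to force exact overlaps from a dimension drop --- that would essentially assume the conjecture you are trying to prove. Your proposed fix, a ``reduction to parameters of bounded complexity along the lines of \cite{BV-entropy}'', is not what the paper does and it is unclear how to make it work here. The paper's actual solution is different in kind: it constructs \emph{analogues of self-similar measures over function fields} --- the underlying space consists of power series rather than real numbers --- introduces a suitable notion of dimension for these objects, and proves a version of Hochman's theorem in that setting. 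The point is that in the function-field world the analogue of exponential separation can be \emph{verified directly} by an argument of the type used for Corollary~\ref{cr:Sierpinski}, so the Hochman-type theorem applies unconditionally to the limit object without any algebraicity assumption on $\l$. This single construction handles vertical and graph curves uniformly; the extra difficulty the paper mentions for graph curves is not the construction itself but the fact that such curves can develop singularities, which complicates the compactness/limiting step you sketched with normal families.

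So: right skeleton, right diagnosis of where the difficulty lies, but the decisive idea --- moving the whole self-similar-measure machinery to a function-field setting where separation is automatic --- is missing, and your Euclidean substitute does not close the argument.
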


The proof of this result like Proposition \ref{pr:Rapaport} is done by attaching suitable fractal
objects to curves and relating their dimension to the entropy rates of the curves.
Then the proposition is proved using lower semi-continuity of dimension and a
limiting argument.
The fractal measures used in the paper \cite{RV-3maps} are analogues of self-similar measures
in function fields.
A suitable notion of dimension is introduced for these objects and Hochman's theorem
is generalized to this setting.
The analogue of the exponential separation property is verified using an argument
similar to that used in the proof of Corollary \ref{cr:Sierpinski}.
An additional difficulty compared to the setting of Section \ref{sc:Rapaport} is
caused by the fact that the curves in $\Gamma$ are not necessarily lines and they may develop
singularities, which complicates limiting arguments.

The proofs of Theorems \ref{th:RV2} and \ref{th:RV3} is
complicated by the fact that like in the
case of Bernoulli convolutions, the parameter points with exact overlaps have a weaker
than exponential repellency property.
To address this, an argument similar to that discussed at the end of
Section \ref{sc:Bernoulli} is used.
This is the reason why we need to assume an affirmative answer to
Question \ref{q:RV}.
The argument also requires a stronger form of Proposition \ref{pr:RV}
with a modified entropy rate.
The precise statement requires some preparation.
For this reason, we omit it and refer to \cite{RV-3maps}*{Proposition 2.4}.

\section{Other developments}\label{sc:other}

We survey some recent results about aspects of self-similar measures
other than their dimensions.
Due to limitation of space, our discussion will be very brief.

\subsection{Fourier decay}

We first discuss Fourier decay of self-similar measures.
Specifically, we discuss the following three properties.
\begin{itemize}
	\item A measure $\mu$ on $\R$ is Rajchman if its Fourier transform
	vanishes at infinity, that is
	\[
	\lim_{|\xi|\to \infty}|\wh\mu(\xi)|=0.
	\]
	\item A measure $\mu$ on $\R$ has polylogarithmic Fourier decay if
there is a constant $a>0$ such that for all sufficiently large $\xi$, we have
\[
|\wh\mu(\xi)|<|\log |\xi||^{-a}.
\]
\item A measure $\mu$ on $\R$ has power Fourier decay if
there is a constant $a>0$ such that for all sufficiently large $\xi$, we have
\[
|\wh\mu(\xi)|<|\xi|^{-a}.
\]
\end{itemize}

There are various motivations for studying these properties.
The Rajchman property is closely related to an old subject in the theory
of trigonometric series about so-called sets of uniqueness and sets of multiplicity,
see \cite{KL} for more.
Fourier decay has also applications in metric Diophantine approximation.
For example, polylogarithmic Fourier decay is sufficient to guarantee
that almost all numbers with respect to the measure are normal in every bases.
(In the case of self-similar measures on $\R$, even the Rajchman property is
enough for this, see \cite{ARHW}*{Theorem 1.4}.)
Power decay is very useful in proving absolute continuity of the measure,
which we discuss more in the next section.

Results about these properties of self-similar measures come in two flavours.
In the first category, properties are proved for most self-similar measures
in a parametric family, in the second the properties are proved for explicit
self-similar measures, that is, the hypotheses of the results are testable in
concrete examples.

We begin by discussing results in the first category.
Erd\H os \cite{Erd-ac} proved that Bernoulli convolutions (see Section \ref{sc:Bernoulli})
have power Fourier decay for almost all choices of the parameter $\l\in(0,1)$.
His argument was revisited by Kahane \cite{Kah} who showed that the exceptional
set of parameters where the power decay fails is in fact of $0$ Hausdorff dimension.
This method was exposed in the survey \cite{PSS-60y}, where the exponent $a$
was also studied, and the term Erd\H os--Kahane argument was coined.
Recently Solomyak \cite{Sol-decay}
showed that non-degenerate self-similar measures on $\R$ has power Fourier decay
if the vector of contraction parameters avoid an exceptional set
of $0$ Hausodorff dimension.
See the references in \cite{Sol-decay}*{Section 1.1} and \cite{Sol-decay-sa}
for more recent applications of the Erd\H os--Kahane method.

The first results in the second category are also in the setting of Bernoulli convolutions.
Erd\H os \cite{Erd-Pisot}
proved that Bernoulli convolutions are not Rajchman when $\l^{-1}$, the
reciprocal of the parameter is a Pisot number, except when the probability weights
are uniform and $\l=1/2m$ for an odd integer $m$.
Recall that a Pisot number is an algebraic integer all of whose Galois conjugates lie
inside the complex unit disk.
Salem \cite{Sal} proved the converse of Erd\H os's
result by showing that Bernoulli convolutions
are Rajchman when $\l^{-1}$ is not Pisot.

The Rajchman property of general self-similar measures has been understood more
recently.
Sahlsten and Li \cite{LS} proved that self-similar measures are Rajchman whenever
the semigroup generated by the contraction parameters is not lacunary, that is, it
is not contained in $\{\l^n:n\in\Z_{\ge 0}\}$ for some $n$.
Their work is based on a new method relying on renewal theory originating in \cite{Li}.
See also \cite{ARHW}, where this result is extended to self-conformal measures
using a different method.
The lacunary case was analysed by Br\'emont \cite{Bre}, see also Varj\'u, Yu \cite{VY}. 
Finally, the problem was solved by Rapaport \cite{Rap-decay}
for self-similar measures on $\R^d$.

For Bernoulli convolutions, polylogarithmic Fourier decay  follows from
a result of Bufetov and Solomyak \cite[Proposition 5.5]{BS}
for algebraic parameters $\l$ provided $\l^{-1}$ is neither Salem or Pisot, that is, it has
another Galois conjugate outside the complex unit disk, see also \cite{GM}.
Under a mild Diophantine condition for the contraction parameters,
Sahlsten and Li \cite{LS} proved polylogarithmic Fourier decay for self-similar measures.
Informally speaking, their condition requires that the semigroup generated by the
contraction parameters is not approximated by lacunary semigroups in a suitable quantitative
sense.
See \cite{ARHW} for a similar result under a different Diophantine condition.
Polylogarithmic Fourier decay was also established by Varj\'u and Yu \cite{VY} for
certain self-similar measures in the lacunary case.

It is an important open problem to characterize which self-similar measures have
power Fourier decay.
Very little is known about this.
See \cite{DFW} for explicit examples of Bernoulli convolutions with power
Fourier decay and \cite{LV} for results about self-similar measures on $\R^d$ for $d\ge 3$.

\subsection{Absolute continuity}

Let $\mu$ be a self-similar measure on $\R$ associated to an IFS with contraction
factors $\{\l_i\}$ that conatins no exact overlaps, and probability weights $\{p_i\}$.
One may expect that $\mu$ is not only of dimension $1$ if
\begin{equation}\label{eq:simdim>1}
\frac{\sum p_i\log p_i^{-1}}{\sum p_i\log \l_i^{-1}}>1,
\end{equation}
as predicted by Conjecture \ref{cn:EO-measure}, but it is also absolutely continuous.
When there is equality in \eqref{eq:simdim>1}, the self-similar measure is almost
always singular, see \cite{NW}*{Theorem 1.1}.

In general, this expectation is false.
Simon and V\'ag\'o \cite{SV} showed that in some families of IFS's, there is a dense
$G^d$ set of parameters, which violate the above statement. 
See \cite{NPS} for earlier related results in a different setting.
However, it could still be true that \eqref{eq:simdim>1} and the lack of
exact overlaps imply absolute continuity
for some families of self-similar measures, for example for Bernoulli convolutions.

Nevertheless, it is expected that self-similar measures are absolutely continuous
for almost all choices of the parameters in parametric families
when \eqref{eq:simdim>1} holds.
For Bernoulli convolutions, this was proved by Erd\H os for $\l$ near $1$, as a
consequence of power Fourier decay with parameter $a>1$.
The result has been extended to the optimal range $\l\in[1/2,1]$ by Solomyak \cite{Sol}
using the transversality method.
See \cite{PSS-60y}, \cite{NW}, \cite{BSSS} and their references for further developments.
Shmerkin \cite{Shm-ac} proved that the set of exceptional parameters in $[1/2,1]$ that make
the Bernoulli convolution singular is of Hausdorff dimension $0$.
His method is based on a result of his that the convolution of a measure of dimension $1$
and another one with power Fourier decay is absolutely continuous.
He used this in conjunction with Hochman's theorem and the Erd\H os-Kahane method.
See \cite{Shm}, \cite{SSS} and the references therein
for further developments using this method.

Explicit examples of absolutely continuous self-similar measures are rare.
The first examples were given by Garsia \cite{Gar-arithmetic} as the Bernoulli
convolutions with parameters of Mahler measure $2$.
See \cite{DFW} for a generalization of this construction, and see \cite{Yu}
for an improvement on the regularity of the density function using Shmerkin's method.
Varj\'u gave new examples of absolutely continuous Bernoulli convolutions in
\cite{Var-ac}.
This paper relies on a similar method to Hochman's in a quantitatively refined form.
A crucial point is that it requires the separation condition to hold at all
sufficiently small scales rather than just at infinitely many of them.
This restricts the method to algebraic parameters currently.
A recent improvement was given by Kittle \cite{Kit}, who gave further new examples
of absolutely continuous Bernoulli convolutions.
While all the new examples in \cite{Var-ac} are very close to $1$, e.g.~$1-10^{-50}$,
this is not the case for \cite{Kit}, which includes e.g.~one near $0.799533\ldots$.
The paper \cite{Kit} also introduces a new tool to
quantify the smoothness of measures at scales.

See \cite{LV} for results about absolute continuity of self-similar measures on $\R^d$
for $d\ge 3$.

\section*{Acknowledgement}
I am grateful to
Simon Baker, Bal\'azs B\'ar\'any, Emmanuel Breuillard, Michael Hochman, Antti K\"aenm\"aki,
Samuel Kittle,  Ariel Rapaport, Tuomas Sahlsten,
Pablo Shmerkin, Boris Solomyak, Laurtiz Streck and Han Yu
for reading and commenting on an earlier version of this paper.

\bibliography{bibfile}

\bigskip

\noindent{\sc Centre for Mathematical Sciences,
Wilberforce Road, Cambridge CB3 0WA,
UK}\\
{\em e-mail address:} pv270@dpmms.cam.ac.uk

\end{document}